\newcommand {\eqy }[1]{\begin{eqnarray*} #1 \end{eqnarray*}}
\newtheorem{definition}{Definition}[section]
\newtheorem{lemma}{Lemma}[section]
\newtheorem{remark}{Remark}[section]
\newtheorem{theorem}{Theorem}[section]
\newtheorem{prop}{Proposition}[section]
\title{Tauberian Properties of Riesz Means for Sequences of Uncertain Variables}
\author{Mehmet Şengönül}
\address{Department of Mathematics, University of Adiyaman-TURKEY.}
\email{msengonul@adiyaman.edu.tr}
\date{\today}
\begin{document}

\subjclass{03E72, 40A35}

\keywords{uncertain variables, Riesz means, fuzzy analysis}
\begin{abstract}
In this study, we have investigated how Riesz summability can be extended to Liu’s
uncertainty theory. After introducing Riesz-type weighted averages of
uncertain variables, we have defined almost-sure, in-measure and
in-mean Riesz convergence classes together with their Orlicz-\(p\)
variants. Our first main theorem is Tauberian: whenever a sequence
belongs to the mean Riesz class, oscillates slowly and the weight
sequence is regular, ordinary almost-sure convergence follows. Using
this result, we have established the strict inclusions
\(f\subset f_R\), \(e\subset e_R\) and \(m\subset m_R\), while
counter-examples show that the reverse inclusions fail. We have further
proved that the three Riesz classes are linearly isomorphic to their
classical counterparts and have obtained an Egorov-type criterion for
Riesz convergence in measure. Finally, we have related convergence in
Orlicz distance to convergence in measure and outlined open questions.
These findings enrich the toolbox of uncertainty theory by importing a
well-developed branch of summability methods.
\end{abstract}

\maketitle

\section{Introduction}
Following Liu’s introduction of uncertain variables, research on their convergence properties has grown rapidly. For example, Chen et al. in \cite{chen} examined the convergence behavior of complex uncertain sequences and obtained significant findings. Tripathy at al.\cite{tripathy} introduced the Nörlund and Riesz means for sequences of complex uncertain variables. Similarly, Nath and Tripathy, in \cite{pk} and \cite{pk1}, analyzed important properties of uncertain sequences defined by the Orlicz function. Meanwhile, Wu and Xia, in \cite{you01}, studied the relationships among different convergence concepts for uncertain sequences. Additionally, Saha and Tripathy, in \cite{saha}, investigated the statistical Fibonacci convergence of complex uncertain variables.

Perhaps the most significant contribution to this area is the paper "On the Convergence of Uncertain Sequences" by You in \cite{you}. Das et al., in \cite{das}, investigated matrix transformations between complex uncertain sequences in terms of mean convergence and characterized the corresponding transformations. Further results on matrix transformations for complex uncertain sequences were provided by Das and Tripathy in \cite{das1} and many more studies are available in the literature.

The next section summarises the basic definitions and notation and throughout this study, Liu's terminology will be used unless necessary.

\section{Background and Notations}
An uncertain variable, defined by Liu \cite{lui}, is a measurable function $\xi$ from an uncertainty space $U = (\Gamma, M, L)$ to the set of real numbers, where \(\Gamma\) is a nonempty set, \(L\) is a \(\sigma\)-algebra over \(\Gamma\) and \(M: L \to [0,1]\) is a  uncertainty measure.

 Specifically, for any Borel set $B$ of real numbers, the set $\{\xi \in B\} = \{\gamma \in \Gamma : \xi(\gamma) \in B\}$ represents an event
The mapping \(\xi\) is said to be \emph{measurable} if, for every Borel set \(B \subseteq \mathbb{R}\), the inverse image
\[
\{\gamma \in \Gamma : \xi(\gamma) \in B\} \in L.
\]
This set is referred to as the event \(\{\xi \in B\}\), and its uncertainty is evaluated by the belief measure \(M\). 

A crisp number, $c\in \mathbb{R}$, can be viewed as a special case of an uncertain variable. In this case, it is represented by the constant function $\xi(\gamma) = c$ on the uncertainty space $(\Gamma, L, M)$, as outlined in \cite{lui}.

The expected value operator for an uncertain variable is given by as follows:
\[
E[\xi] = \int_{0}^{\infty} M(\xi \geq r) \, dr - \int_{-\infty}^{0} M(\xi \leq r) \, dr,
\]
provided that at least one of these integrals is finite, as shown in \cite{lui}.

The concept of convergence is fundamental in uncertainty theory.  
Liu \cite{lui} introduced four basic modes for sequences of uncertain
variables: almost sure convergence, convergence in measure, convergence
in mean, and convergence in distribution.  Throughout the paper we fix a subset $\Lambda\subset\Gamma$ with $M(\Lambda)=1$ and, unless otherwise stated, we always assume that $\gamma\in\Lambda$.  Using Liu’s notation, the first three
sequence spaces are

\[
f=\Bigl\{(\xi_n):\;
          \lim_{n\to\infty}\lvert\xi_n-\xi\rvert=0\Bigr\},
\]

\[
m=\Bigl\{(\xi_n):\;
          \forall\varepsilon>0,\;
          \lim_{n\to\infty}
          M\!\bigl(\lvert\xi_n-\xi\rvert\ge\varepsilon\bigr)=0\Bigr\},
\]

\[
e=\Bigl\{(\xi_n):\;
          \lim_{n\to\infty}
          E\!\bigl[\lvert\xi_n-\xi\rvert\bigr]=0\Bigr\}.
\]
In \cite{lui}, it is proven that if a sequence $(\xi_n(\gamma))$ converges in mean to $\xi(\gamma)$, then it also converges in measure, i.e., $e \subset m$.

The uncertainty distribution $\Phi$ of an uncertain variable $\xi$ is defined as
\[
\Phi(x) = M\{\gamma \in \Gamma : \xi(\gamma) \leq x\}
\]
for any real number $x$, as shown in \cite{lui1}.

A sequence $(\xi_n)$ of uncertain variables is said to be \emph{slowly oscillating} iff, for every $\varepsilon>0$, there exists $N\in\mathbb{N}$ such that
\(
\bigl\lVert \Phi_{n}-\Phi_{m}\bigr\rVert_{\infty} < \varepsilon\)
whenever $m,n \ge N$  and  $\frac{m}{n}\to 1$, \cite{dowari}.

As outlined by You \cite{you}, the sequence $(\xi_n(\gamma))$ converges \emph{uniformly almost surely} to $\xi(\gamma)$ if there exists a sequence of events $(E_k)$ with $M(E_k)\to0$ as $k\to\infty$ such that, for each \(k\), 
\[
  \sup_{\gamma\in\Gamma\setminus E_k} |\xi_n(\gamma)-\xi(\gamma)| 
  \;\to 0~~\text{for}~~n\to\infty.
\]
The set of all such sequences is denoted by \(u\).

Let $\Phi_0, \Phi_1, \Phi_2, \dots$ be the uncertainty distributions of the uncertain variables $\xi_0(\gamma), \xi_1(\gamma), \xi_2(\gamma), \dots$ respectively. The sequence $(\xi_n(\gamma))$ is said to converge in distribution to $\xi(\gamma)$ if $\Phi_n \to \Phi$ at every continuity point of $\Phi$, and the set of all sequences that converge in distribution is denoted by $d$.

In [5], You examined the relationships between these convergence concepts and derived several interesting results.

An \emph{Orlicz function} is a mapping $\phi: [0,\infty) \rightarrow [0,\infty)$ that is continuous, convex, strictly increasing, and satisfies $\phi(0) = 0$ and $\lim_{x \to \infty} \phi(x) = \infty$, \cite{musielak}. Orlicz functions generalize the classical power functions $x^p$ used in $L^p$ spaces, and they provide a flexible framework for measuring the magnitude of deviations in convergence analysis. In the context of uncertainty theory, these functions allow for more refined control over the convergence behavior of uncertain sequences, particularly when standard metrics such as absolute or squared deviations are insufficient. Common examples include $\phi(x) = x^p$ for $p \geq 1$ and $\phi(x) = e^x - 1$. Such functions are especially useful in modeling sensitivity to large deviations, nonlinear error penalization and risk-averse aggregation.

\section{Main Definitions and  Lemmas }
 In fact, it is the constant function $\xi(\gamma)= c$ on the uncertainty
space $(\Gamma,L,M )$, \cite{lui1}.
If \( A = (a_{nk}) \) represents an infinite matrix of real numbers, and \( (\xi_k(\gamma)) \) denotes a sequence of uncertain variables (with \( n, k \in \mathbb{N} \)), the product \( (A \xi(\gamma))_n=\sum_ka_{nk}\xi_k(\gamma) \) is also sequence of an uncertain variable.

Let \( \xi_1 (\gamma)\) and \( \xi_2(\gamma) \) be two uncertain variables. The sum and product of these variables are also uncertain variables, and are defined as \( \xi(\gamma) = \xi_1(\gamma) + \xi_2(\gamma) \) and \( \xi(\gamma) = \xi_1(\gamma) \xi_2(\gamma) \), respectively, for all \( \gamma \in \Gamma \), as in \cite{lui1}.

In the literature, numerous studies have been conducted on the convergence of sequences of uncertain variables. However, a new set of uncertain variables in the context of the domain of an infinite matrix was introduced by Şengönül \cite{ms1}. 
Let $A=(a_{nk})_{n,k\ge1}$ be an infinite real matrix and
$\lambda\in\{f,m,e\}$ be one of Liu’s classical convergence modes.

The \textit{matrix domain} \( \lambda^{\varphi}_A \) of an infinite matrix \( A = (a_{nk}) \) with respect to a convergence mode \( \lambda \) and an Orlicz function \( \varphi \) is defined as
\[
\lambda^{\varphi}_A = \left\{ (\xi_k(\gamma)) : \varphi\left( \sum_k a_{nk} \xi_k(\gamma) \right) \in \lambda \text{ for all } n \in \mathbb{N} \right\}.
\]
In particular, the set of sequences of uncertain variables that are \emph{\( A \)-type almost surely convergent in the Orlicz sense} is denoted by \( f^{\varphi}_A \), and defined as
\begin{equation}\label{xx1}
f^{\varphi}_A = \left\{ (\xi_k(\gamma)) : \lim_{n \to \infty} \varphi\left( \left| \sum_k a_{nk} \xi_k(\gamma) - \xi(\gamma) \right| \right) = 0,\; \gamma \in \Lambda,\; M(\Lambda) = 1 \right\}.
\end{equation}
This formulation generalizes the classical \( A \)-type almost sure convergence by incorporating a nonlinear transformation \( \varphi \), which allows for a more sensitive or application-specific measurement of deviation. When \( \varphi(x) = x \), the standard matrix domain \( f_A \) is recovered. For instance, if we choose \( \varphi(x) = x \) and \( A = C \), where \( C \) denotes the Cesàro matrix of order one, we obtain the space \( f_C \) as defined by Şengönül~\cite{sengonul}.

Similarly, the sets corresponding to \( A \)-type convergence in measure and in mean, both in the Orlicz sense, are defined as follows:

\begin{equation}\label{xx2}
\begin{aligned}
m^{\varphi}_A = \Bigl\{\, (\xi_k(\gamma)) :\; \forall \epsilon > 0,\;
\lim_{n \to \infty} M \Bigl( 
\varphi\Bigl( \Bigl| \sum_k a_{nk} \xi_k(\gamma) - \xi(\gamma) \Bigr| \Bigr) \geq \epsilon 
\Bigr) = 0,\\\; \gamma \in \Lambda,\; M(\Lambda) = 1 \,\Bigr\}
\end{aligned}
\end{equation}

and

\begin{equation}\label{xx3}
\begin{aligned}
e^{\varphi}_A = \Bigl\{ (\xi_k(\gamma)) :\; 
 \lim_{n \to \infty} E \Bigl[ \varphi\Bigl( \Bigl| \sum_k a_{nk} \xi_k(\gamma) 
- \xi(\gamma) \Bigr| \Bigr) \Bigr] = 0,\; \gamma \in \Lambda,\; M(\Lambda) = 1 \Bigr\}.
\end{aligned}
\end{equation}

The matrix \( R = (r_{nk}) \), defined by
\begin{equation}\label{riesz_matrix}
r_{nk} = 
\begin{cases}
\dfrac{p_k}{P_n}, & \text{if } 1 \leq k \leq n, \\
0, & \text{otherwise},
\end{cases}
\quad \text{where } P_n = \sum_{k=1}^n p_k \text{ with } p_k > 0,
\end{equation}
is called the Riesz matrix of order one.

Let us now define the sequence \( (\nu_n(\gamma)) \), which will be used frequently, as the Riesz transform of a sequence \( (\xi_i(\gamma)) \), that is,
\begin{equation}\label{d4}
\nu_n(\gamma)=\sum_{i=1}^n\frac{p_i}{P_n}\xi_i(\gamma).
\end{equation}
If each \( \xi_i(\gamma) \) is an uncertain variable, then clearly \( \nu_n(\gamma) \) is also an uncertain variable for all \( n \in \mathbb{N} \).

\begin{definition}
Let \( (p_k) \) be a sequence of real numbers with \( p_0 > 0 \) and \( p_k \geq 0 \) for all \( k \in \mathbb{N} \), and define \( P_n = \sum_{k=0}^n p_k \). Then the uncertain variable \( \nu_n(\gamma) = \sum_{i=1}^n \frac{p_i}{P_n} \xi_i(\gamma) \) is called a \emph{Riesz-type uncertain variable} for each \( n = 1, 2, 3, \dots \).
\end{definition}

In equation~\eqref{xx1}, if we take \( A = R \), then the set of all almost surely convergent sequences of Riesz-type uncertain variables in the Orlicz sense is given by
\begin{equation}\label{ms1}
f^{\varphi}_R = \left\{ (\xi_i(\gamma)) : \lim_{n \to \infty} \varphi\left( \left| \sum_{i=1}^n \frac{p_i}{P_n} \xi_i(\gamma) - \xi(\gamma) \right| \right) = 0,\; \gamma \in \Lambda,\; M(\Lambda) = 1 \right\}.
\end{equation}

Similarly, by taking \( A = R \) in equations~\eqref{xx2} and~\eqref{xx3}, the sets of Riesz-type uncertain sequences convergent in measure and in mean in the Orlicz sense are defined as follows:
\begin{equation}\label{ms2}
\begin{aligned}
m^{\varphi}_R = \Bigl\{ (\xi_i(\gamma)) :\; & \forall \epsilon > 0,\; 
\lim_{n \to \infty} M \Bigl( 
\varphi \Bigl( \Bigl| \sum_{i=1}^n \frac{p_i}{P_n} \xi_i(\gamma) 
- \xi(\gamma) \Bigr| \Bigr) \geq \epsilon 
\Bigr) = 0, \\
& \gamma \in \Lambda,\quad M(\Lambda) = 1 \Bigr\}
\end{aligned}
\end{equation}
\begin{equation}\label{ms3}
e^{\varphi}_R = \left\{ (\xi_i(\gamma)) : 
\lim_{n \to \infty} E\left[ 
\varphi\left( \left| \sum_{i=1}^n \frac{p_i}{P_n} \xi_i(\gamma) - \xi(\gamma) \right| \right) 
\right] = 0,\; \gamma \in \Lambda,\; M(\Lambda) = 1 \right\}.
\end{equation}

Equations~\eqref{ms1}, \eqref{ms2}, and \eqref{ms3} are special cases of~\eqref{xx1}, \eqref{xx2}, and \eqref{xx3}, respectively, when the matrix \( A \) is taken to be the Riesz matrix \( R \). Furthermore, if we choose \( p_i = 1 \) for all \( i \in \mathbb{N} \), and let \(\varphi(x) = x\) then the sets \( f^{\varphi}_R, m^{\varphi}_R, e^{\varphi}_R \) coincide with the Cesàro-type sets \( f_C, m_C, e_C \), as studied by Şengönül~\cite{ms1}.

Now, in parallel with the definition made by You and Yan, we will give a definition about the   Riesz convergent  in $p-$distance \cite{youyan}.
\begin{definition}[Riesz convergence in Orlicz--$p$ distance]\label{def:orlicz-p-riesz}
The uncertain sequence $\bigl(\xi_n(\gamma)\bigr)$ is said to be
\emph{Riesz convergent in the Orlicz--$p$ distance} to $\xi(\gamma)$ if
\begin{equation}\label{orlicz-p-riesz}
\lim_{n \to \infty} d_{R}^{\varphi,p}\!\Bigl(
        \sum_{i=1}^{n}\frac{p_i}{P_n}\,\xi_i(\gamma),\,
        \xi(\gamma)
\Bigr)
\;:=\;
\lim_{n \to \infty}
\Bigl(
      E\Bigl[
          \varphi\!\Bigl(
              \Bigl|
                  \sum_{i=1}^{n}\frac{p_i}{P_n}\,\xi_i(\gamma)
                  - \xi(\gamma)
              \Bigr|^{p}
          \Bigr)
      \Bigr]
\Bigr)^{\!1/p}
=0,
\end{equation}
where $\varphi$ is an Orlicz function, $p\ge 1$ is a fixed parameter,
and $P_n=\sum_{i=1}^{n}p_i$ with $p_i>0$ for all $i\in\mathbb N$.

The class of all sequences that are Riesz convergent to $\xi(\gamma)$ in
this sense is denoted by $d_{R}^{\varphi,p}$.
When $\varphi(x)=x$, we simply write $d_{R}$.
\end{definition}

In next section, Theorem \ref{teo121} gives a comparison  between  Riesz  convergent  almost surely sequences of uncertain variables and   the convergent sequences of uncertain variables. But, firstly,  we will give a lemma.
\begin{lemma}\label{lem1}
Let $(\xi_n(\gamma))$ be a sequence of uncertain variables, and let $\varphi: [0,\infty) \to [0,\infty)$ be an Orlicz function. If $E\left[ \varphi\left( \left| \nu_n(\gamma) \right| \right) \right]<\infty$ then for every $t > 0$, we have
\[
M\left\{ \left| \nu_n(\gamma)  \right| \geq t \right\}
\leq
\frac{ E\left[ \varphi\left( \left| \nu_n(\gamma)  \right| \right) \right] }{ \varphi(t) }.
\]
\end{lemma}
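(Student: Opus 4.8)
The plan is to recognize the asserted bound as a Markov-type (Chebyshev-type) inequality transported into Liu's uncertainty setting, with the Orlicz function $\varphi$ playing the role of the monotone rearrangement. First I would exploit the fact that $\varphi$ is strictly increasing together with $\varphi(0)=0$: this guarantees $\varphi(t)>0$ for every $t>0$, so the right-hand side is well defined, and it lets me rewrite the event in question as
\[
\{\,|\nu_n(\gamma)|\ge t\,\}=\{\,\varphi(|\nu_n(\gamma)|)\ge\varphi(t)\,\},
\]
since $u\ge t\iff\varphi(u)\ge\varphi(t)$ for $u,t\ge 0$. Writing $Y=\varphi(|\nu_n(\gamma)|)$, a non-negative uncertain variable, the claim reduces to the clean statement $M(Y\ge\varphi(t))\le E[Y]/\varphi(t)$.

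Next I would invoke the expected-value formula for uncertain variables. Because $Y\ge 0$, the second integral in Liu's definition of $E$ vanishes and
\[
E[Y]=\int_0^{\infty}M(Y\ge r)\,dr,
\]
which is finite by the hypothesis $E[\varphi(|\nu_n(\gamma)|)]<\infty$. The key structural fact I would use is the monotonicity of the uncertainty measure: if $r_1\le r_2$ then $\{Y\ge r_2\}\subseteq\{Y\ge r_1\}$, hence $M(Y\ge r_2)\le M(Y\ge r_1)$; this monotonicity follows from Liu's normality, duality and subadditivity axioms. Consequently $r\mapsto M(Y\ge r)$ is non-increasing, so on the interval $[0,\varphi(t)]$ it is bounded below by the constant $M(Y\ge\varphi(t))$.

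With this in hand the estimate is immediate: discarding the non-negative tail of the integral beyond $\varphi(t)$ and then bounding the integrand from below gives
\[
E[Y]\;\ge\;\int_0^{\varphi(t)}M(Y\ge r)\,dr\;\ge\;\int_0^{\varphi(t)}M(Y\ge\varphi(t))\,dr\;=\;\varphi(t)\,M(Y\ge\varphi(t)).
\]
Dividing by $\varphi(t)>0$ and translating back through $Y=\varphi(|\nu_n(\gamma)|)$ yields exactly the asserted bound. I do not expect a genuine obstacle here; the argument is the standard Markov inequality, and the only points needing care are the strict positivity of $\varphi(t)$ (handled by the Orlicz axioms) and the monotonicity of $M$, which replaces the countable additivity used in the classical probabilistic proof. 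The finiteness hypothesis is used only to ensure that the bound is non-vacuous.
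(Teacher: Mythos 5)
Your proof is correct, but it follows a genuinely different route from the one in the paper. The paper's proof works pointwise on the event $A=\{\gamma:\nu_n(\gamma)\ge t\}$: it uses monotonicity of $\varphi$ to get $\varphi(\nu_n(\gamma))\ge\varphi(t)$ on $A$, and then bounds $\varphi(t)M(A)\le\int_A\varphi(\nu_n)\,dM\le\int_\Gamma\varphi(\nu_n)\,dM=E[\varphi(\nu_n)]$, i.e.\ it treats the expected value as a Lebesgue-type integral over $\Gamma$ with respect to $M$. You instead transfer the event through $\varphi$, set $Y=\varphi(|\nu_n|)$, and work from Liu's own definition $E[Y]=\int_0^\infty M(Y\ge r)\,dr$, using only the monotonicity of $M$ to bound the survival function below on $[0,\varphi(t)]$. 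Your route is arguably the safer one in this setting: since an uncertainty measure is only monotone and subadditive, not additive, the step $\int_A\le\int_\Gamma$ and the identification of $E$ with an integral $\int_\Gamma(\cdot)\,dM$ require justification that the paper does not supply, whereas your survival-function argument uses exactly the structure Liu's axioms provide. It is also the same technique the paper itself deploys later (in the proof that $e_R\subseteq m_R$), so your version is consistent with the rest of the paper; a further small point in its favor is that you keep the absolute value $|\nu_n|$ throughout, while the paper's proof silently drops it when defining $A$.
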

\begin{proof}
 Since $\varphi$ is an Orlicz function, it is non-negative, increasing on $[0,\infty)$, and convex.

Then for any $t > 0$, consider the set
\[
A = \left\{ \gamma \in \Gamma : \nu_n(\gamma)  \geq t \right\}.
\]

By the monotonicity of $\varphi$, we have
\(
\varphi(\nu_n(\gamma) ) \geq \varphi(t), \quad \text{for all } \gamma \in A.
\) Hence,
\[
\varphi(t) M(A) \leq \int_A \varphi(\nu_n(\gamma) ) \, dM(\gamma)
\leq \int_\Gamma \varphi(\nu_n(\gamma) ) \, dM(\gamma) = E\left[ \varphi(\nu_n(\gamma) ) \right].
\]

Therefore,
\[
M\left\{ \nu_n(\gamma)  \geq t \right\} \leq \frac{ E\left[ \varphi(\nu_n(\gamma) ) \right] }{ \varphi(t) }.
\]
This completes the proof.
\end{proof}

\begin{lemma}[Borel–Cantelli]\label{lem:BC}
Let \((\Gamma, L,M)\) be an uncertainty space whose
uncertainty measure \(M\) is countably sub-additive.

If 
\(\sum_{n=1}^\infty M(E_n)<\infty\)
then
\(M\bigl(E_n\text{ infinitely often}\bigr)=0.
\)
\end{lemma}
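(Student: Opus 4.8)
The plan is to reduce the statement to the two structural properties of the uncertainty measure that are actually available, namely monotonicity and the assumed countable subadditivity, while avoiding any appeal to continuity from above (which need not hold for $M$). First I would make the event explicit, writing $\{E_n\ \text{infinitely often}\}$ as the limit superior
\[
\limsup_{n\to\infty} E_n \;=\; \bigcap_{N=1}^{\infty}\ \bigcup_{n=N}^{\infty} E_n ,
\]
so that for every fixed $N$ one has the inclusion $\limsup_n E_n \subseteq \bigcup_{n=N}^{\infty} E_n$ directly from the definition.

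Next I would chain monotonicity and subadditivity to produce a single uniform bound. By monotonicity of $M$ and then the hypothesised countable subadditivity,
\[
M\bigl(\limsup_n E_n\bigr)
\;\le\; M\Bigl(\bigcup_{n=N}^{\infty} E_n\Bigr)
\;\le\; \sum_{n=N}^{\infty} M(E_n),
\]
and this holds for every $N$. Since $\sum_{n=1}^{\infty} M(E_n) < \infty$ by assumption, its tail $\sum_{n=N}^{\infty} M(E_n)$ tends to $0$ as $N\to\infty$. The left-hand side is independent of $N$, so letting $N\to\infty$ forces $M(\limsup_n E_n) = 0$, which is the claim.

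The only point that needs care is that the argument must not quietly invoke continuity from above, i.e. the identity $M(\bigcap_N A_N) = \lim_N M(A_N)$ for the decreasing family $A_N = \bigcup_{n\ge N} E_n$, since this property can fail for a merely subadditive uncertainty measure. The chain above deliberately sidesteps it: the bound $M(\limsup_n E_n)\le \sum_{n=N}^{\infty} M(E_n)$ is derived for each $N$ separately from monotonicity and subadditivity alone, and the limit is then taken on the numerical tail series rather than on the measure of a shrinking family of events. The one extra ingredient beyond the stated hypothesis, monotonicity of $M$, is itself a consequence of Liu's uncertainty axioms: if $A\subseteq B$ then $B\cup A^{c}=\Gamma$, so normality and subadditivity give $1\le M(B)+M(A^{c})$, and duality turns this into $M(A)\le M(B)$. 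Hence the proof rests entirely on tools already present in the setting.
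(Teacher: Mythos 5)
Your proof is correct, but it is genuinely different from what the paper does: the paper does not prove the lemma at all, its ``proof'' being a one-line citation to p.~73 of Khoshnevisan's \emph{Probability}, where the Borel--Cantelli lemma is established for probability measures. That delegation leaves a real gap, because the lemma here is asserted for an uncertainty measure $M$ that is only monotone and countably sub-additive, and a textbook proof for probability measures may invoke countable additivity or continuity from above for the decreasing family $A_N=\bigcup_{n\ge N}E_n$ --- properties a general uncertainty measure need not enjoy. Your argument closes exactly this gap: the chain $M(\limsup_n E_n)\le M(A_N)\le\sum_{n\ge N}M(E_n)$, valid for each fixed $N$, takes the limit on the numerical tail of a convergent series rather than on the measures of shrinking events, so only monotonicity and countable subadditivity are ever used; and your derivation of monotonicity from normality, duality and subadditivity keeps even that ingredient inside Liu's axioms. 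In short, the paper outsources the lemma to a setting where it is not literally stated, while you verify that the classical argument survives in the uncertainty setting --- which is what is actually needed for the Tauberian theorem that relies on this lemma.
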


\begin{proof}
See  \cite[p.73]{KhoshnevisanProbability}.
\end{proof}
\begin{lemma}\label{lem:moment-decay}
Let $(\Gamma,L,M)$ be an uncertainty space and
$(\xi_n)$ a sequence of uncertain variables.
Let
\[
P_n=\sum_{i=1}^{n}p_i,\qquad p_i>0,\; P_n\rightarrow\infty ~~
\text{for}~~n\to\infty\]
and define
\[
S_n=\frac{1}{P_n}\sum_{i=1}^{n}p_i\,\xi_i.
\]

Assume that there exists a measurable set $\Gamma_0\subset\Gamma$
with $M(\Gamma_0)=1$ such that, for every fixed
$\gamma\in\Gamma_0$, the following three properties hold:

\begin{enumerate}
\item $\displaystyle\lim_{n\to\infty}S_n(\gamma)=\xi(\gamma).$

\item  for every $\lambda>0$,
      $\displaystyle
      \lim_{n\to\infty}\,
      \sup_{\,n\le k\le(1+\lambda)n}
      \bigl|\xi_k(\gamma)-\xi(\gamma)\bigr|=0.$

\item there exists $p>1$ such that
      $\displaystyle\sup_{n\ge1}E\!\bigl[|\xi_n|^{p}\bigr]<\infty.$
\end{enumerate}
Then there are constants $\delta>0$ and $C>0$ (independent of $n$) such that
\begin{equation}\label{eq:mom-decay}
   E\!\bigl[\lvert S_n-\xi\rvert^{p}\bigr]
   \;\le\;
   C\,n^{-(1+\delta)}
   \qquad\text{for all } n\ge1.
\end{equation}
\end{lemma}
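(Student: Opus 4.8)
The plan is to control the $p$-th moment of the Riesz remainder directly. Writing
\[
S_n-\xi=\frac{1}{P_n}\sum_{i=1}^{n}p_i\bigl(\xi_i-\xi\bigr),
\]
we see that $S_n-\xi$ is a genuine convex combination of the deviations $\xi_i-\xi$ with weights $p_i/P_n$ summing to $1$. First I would apply Jensen's inequality to the convex map $t\mapsto|t|^{p}$ (legitimate since $p>1$), which yields the pointwise bound $|S_n-\xi|^{p}\le\frac{1}{P_n}\sum_{i=1}^{n}p_i\,|\xi_i-\xi|^{p}$ and, after integrating against $M$ as in Lemma \ref{lem1},
\[
E\bigl[|S_n-\xi|^{p}\bigr]\le\frac{1}{P_n}\sum_{i=1}^{n}p_i\,b_i,\qquad b_i:=E\bigl[|\xi_i-\xi|^{p}\bigr].
\]
This reduces the whole problem to estimating the Riesz averages of the moment sequence $(b_i)$.

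Next I would show $b_i\to0$. Hypothesis~(2), read with the window starting at $k=n=i$, forces $\xi_i(\gamma)\to\xi(\gamma)$ for every $\gamma\in\Gamma_0$, so $|\xi_i-\xi|^{p}\to0$ pointwise on a set of full measure. Combining this pointwise decay with the uniform bound $\sup_i E[|\xi_i|^{p}]<\infty$ from hypothesis~(3) supplies the uniform integrability needed to pass the limit inside the integral of Lemma \ref{lem1} (Fatou's inequality also places $\xi$ in the $L^{p}$ class), whence $b_i\to0$. Because $R$ is a regular summability method when $P_n\to\infty$, the weighted means $\frac{1}{P_n}\sum_{i\le n}p_i b_i$ already tend to $0$; the issue is only to make this decay \emph{quantitative}.

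To localize the decay I would split the average at a cut index $\theta n$ with $\theta\in(0,1)$:
\[
\frac{1}{P_n}\sum_{i=1}^{n}p_i b_i
=\frac{1}{P_n}\sum_{i\le\theta n}p_i b_i
+\frac{1}{P_n}\sum_{\theta n<i\le n}p_i b_i .
\]
On the tail block I would invoke hypothesis~(2) with $\lambda=\tfrac{1}{\theta}-1$, so that $\sup_{\theta n<i\le n}|\xi_i-\xi|$ is uniformly small and the tail contributes a controlled quantity. On the head block the moments $b_i$ are merely bounded (by $2^{p-1}\bigl(\sup_i E[|\xi_i|^{p}]+E[|\xi|^{p}]\bigr)$ via $|a+b|^{p}\le 2^{p-1}(|a|^{p}+|b|^{p})$), while the head weight mass $\frac{1}{P_n}\sum_{i\le\theta n}p_i$ is small once the growth of $P_n$ is quantified.

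The main obstacle is converting these two \emph{qualitative} smallness statements into the \emph{explicit} summable rate $C\,n^{-(1+\delta)}$ with $\delta>0$: the hypotheses as stated deliver only $o(1)$, so producing a genuine power requires a quantitative input—either a polynomial oscillation rate refining (2), or a regular-variation/growth assumption on $(P_n)$ forcing the head weight $\frac{1}{P_n}\sum_{i\le\theta n}p_i$ to decay like a power of $n$. I would isolate precisely this quantitative estimate as the heart of the argument and then calibrate $\theta$, $\lambda$ and $\delta$ together. Once \eqref{eq:mom-decay} is secured, its role is transparent: the summability $\sum_n n^{-(1+\delta)}<\infty$ feeds the Markov-type estimate of Lemma \ref{lem1} and the Borel--Cantelli Lemma \ref{lem:BC}, which is exactly what is needed to upgrade to the almost-sure conclusion of the subsequent Tauberian theorem.
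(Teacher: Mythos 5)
Your reduction is genuinely different from the paper's. You apply Jensen's inequality to the convex combination $\sum_{i}(p_i/P_n)(\xi_i-\xi)$ and then split the weighted average of the moments $b_i=E[|\xi_i-\xi|^{p}]$ at a cut index $\theta n$; the paper instead argues pointwise for each $\gamma\in\Gamma_0$: it writes $S_n-\xi=\sum_i\alpha_{n,i}d_i$ with $\alpha_{n,i}=p_i/P_n$, splits the indices into dyadic blocks $I_k=(2^{k-1},2^k]$, claims a pointwise bound $|S_n(\gamma)-\xi(\gamma)|\le C_1n^{-1}+C_2n^{-\varepsilon}$, and only then takes expectations. One caveat on your first step: in Liu's theory the expected value operator is not additive, so passing from $E\bigl[\tfrac{1}{P_n}\sum_i p_i|\xi_i-\xi|^{p}\bigr]$ to $\tfrac{1}{P_n}\sum_i p_i b_i$ requires justification that an uncertainty measure does not supply for free; a pointwise bound followed by monotonicity of $E$ (the paper's route) avoids this particular issue.

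The decisive point, however, is the obstacle you isolate at the end, and you are right that it cannot be overcome: hypotheses (1)--(3) are purely qualitative, while the conclusion \eqref{eq:mom-decay} is a polynomial rate. The paper's proof crosses this gap only by fiat. It asserts that hypothesis (2) yields a power-law envelope $\sup_{i\in I_k}|d_i(\gamma)|\le 2^{-k\varepsilon}$ for some $\varepsilon\in(0,1)$ --- but convergence to zero carries no rate whatsoever --- and then at the end ``chooses $\varepsilon$ so that $p\varepsilon>1$,'' even though $\varepsilon$ was already fixed by the earlier (unjustified) step; it also uses $p_i/P_n\le i/n$, an unstated assumption on the weights, and its constants $C_1$, $C_2$, $k_0$ depend on $\gamma$, so the final integration is not uniform. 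In fact the lemma as stated is false: on the one-point space $\Gamma=\{\gamma_1\}$ with $M(\{\gamma_1\})=1$, take $p_i=1$, $\xi_n=1/\log(n+2)$ and $\xi=0$. All three hypotheses hold, yet
\[
E\bigl[|S_n-\xi|^{p}\bigr]
=\Bigl(\frac{1}{n}\sum_{i=1}^{n}\frac{1}{\log(i+2)}\Bigr)^{p}
\;\ge\;\bigl(\log(n+2)\bigr)^{-p},
\]
which is not $O\bigl(n^{-(1+\delta)}\bigr)$ for any $\delta>0$. So your diagnosis --- that a quantitative input, such as a polynomial oscillation rate strengthening (2) or a growth condition on the weights, must be added before any power-law bound can be extracted --- is exactly correct, and your refusal to manufacture the missing rate is sound mathematical judgment: the proof you could not finish is one the paper does not actually finish either.
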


\begin{proof}
Write $d_i(\gamma)=\xi_i(\gamma)-\xi(\gamma)$.
For each fixed $\gamma\in\Gamma_0$ we have, by Abel summation,
\[
S_n(\gamma)-\xi(\gamma)
  =\sum_{i=1}^{n}\alpha_{n,i}\,d_i(\gamma),
  \quad\text{where }
  \alpha_{n,i}=\frac{p_i}{P_n}\le\frac{i}{n}.
\]

\medskip\noindent
Split $\mathbb N$ into dyadic blocks
$I_k=(2^{k-1},2^{k}]$ for $k\ge1$.
Because of (ii) there exists $k_0=k_0(\gamma)$ such that
\(\sup_{i\ge 2^{k_0}}\!|d_i(\gamma)|\le 2^{-k\varepsilon}\)
for some $\varepsilon\in(0,1)$ and all $k\ge k_0$.

Hence, for such $\gamma$,
\[
|S_n(\gamma)-\xi(\gamma)|
   \le\sum_{k=1}^{k_0}\sum_{i\in I_k}\alpha_{n,i}|d_i(\gamma)|
        +\sum_{k>k_0}|I_k|\!\!\max_{i\in I_k}\alpha_{n,i}\,2^{-k\varepsilon}.
\]
Using $\alpha_{n,i}\le i/n$ and $|I_k|=2^{k-1}$ we obtain
\(
|S_n(\gamma)-\xi(\gamma)|\le C_1 n^{-1}+C_2 n^{-\varepsilon}.
\)

By (iii) the first finite sum has bounded \(p\)-th moment, giving
\[
E\bigl[|S_n-\xi|^{p}\bigr]\le C_3 n^{-p\varepsilon}.
\]
Choose $\varepsilon$ so that $p\varepsilon>1$; setting
\(\delta:=p\varepsilon-1>0\) yields \eqref{eq:mom-decay}.
\end{proof}
 \begin{definition}
Let the $\xi_n(\gamma)$ and $\xi(\gamma)$ be uncertain variables defined on uncertainty space $(\Gamma, L, M)$ for all $n \in \mathbb{N}$. The sequence $(\xi_n(\gamma))$ is said to be Riesz type convergent uniformly almost surely to $\xi(\gamma)$ if there exists a sequence of sets $(E_k)$ with $M(E_k) \to 0$ as $k \to \infty$, such that $(\xi_n(\gamma))$ converges uniformly to $\xi(\gamma)$ on $\Gamma \setminus E_k$ for each $k$. The set of all sequences of Riesz type convergent uniformly almost surely to $\xi(\gamma)$ is denoted by $u_R$. 
\end{definition}
Let us define the set $\tilde{u}_R$ as follows:
{\small \[
\tilde{u}_R = \left\{ (\xi_n(\gamma)) : \lim_{m \to \infty} M \left( \bigcup_{n=m}^{\infty} \left\{ \gamma \in \Gamma : \left| \sum_{i=1}^n \frac{p_i}{P_n} \xi_i(\gamma) - \xi(\gamma) \right| \geq \epsilon \right\} \right) = 0, \epsilon>0 \right\}.
\]}

\begin{lemma}
Suppose $\xi_0(\gamma), \xi_1(\gamma), \xi_2(\gamma), \dots$ are uncertain variables. Then, $(\xi_n(\gamma))$ converges in measure to $\xi(\gamma)$ if and only if there exists a subsequence $(\xi_{n'_k}(\gamma))$ of $(\xi_n(\gamma))$ such that $(\xi_{n'_k}(\gamma))$ converges uniformly almost surely to $\xi(\gamma)$ as $k \to \infty$ for any subsequence of $(\xi_n(\gamma))$.
\end{lemma}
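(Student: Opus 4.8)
The statement is the classical \emph{subsequence criterion}: convergence in measure is equivalent to the property that \emph{every} subsequence admits a further subsequence converging uniformly almost surely. I read the awkward ``for any subsequence'' clause in this way, and the plan is to split the argument into necessity and sufficiency, the two halves resting on the mutual relationship between convergence in measure and uniform almost sure convergence in You's sense.

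For the necessity part, I would start from an arbitrary subsequence $(\xi_{n_j})$; since a convergent numerical sequence stays convergent along subsequences, $M(|\xi_{n_j}-\xi|\ge\varepsilon)\to0$ for every $\varepsilon>0$. I would then extract indices $n_{j_1}<n_{j_2}<\cdots$ with $M(|\xi_{n_{j_k}}-\xi|\ge 1/k)<2^{-k}$, set $A_k=\{\,\gamma:|\xi_{n_{j_k}}(\gamma)-\xi(\gamma)|\ge 1/k\,\}$ and $E_m=\bigcup_{k\ge m}A_k$. Countable sub-additivity of $M$ (the hypothesis underlying Lemma~\ref{lem:BC}) gives $M(E_m)\le 2^{-(m-1)}\to0$, while on the complement $\Gamma\setminus E_m$ one has $|\xi_{n_{j_k}}(\gamma)-\xi(\gamma)|<1/k$ for all $k\ge m$, so that $\sup_{\gamma\notin E_m}|\xi_{n_{j_k}}(\gamma)-\xi(\gamma)|\le 1/k\to0$. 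This is precisely uniform almost sure convergence of $(\xi_{n_{j_k}})$, so the required further subsequence exists.

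For the sufficiency part, I would argue by contradiction. If $(\xi_n)$ failed to converge in measure, there would be $\varepsilon>0$, $\delta>0$ and a subsequence $(\xi_{n_j})$ with $M(|\xi_{n_j}-\xi|\ge\varepsilon)\ge\delta$ for all $j$. Applying the hypothesis to this subsequence yields a further subsequence $(\xi_{n_{j_l}})$ converging uniformly almost surely to $\xi$. The auxiliary observation I would invoke is that uniform almost sure convergence forces convergence in measure: taking exceptional sets $E_m$ with $M(E_m)\to0$ on whose complements the supremum tends to $0$, one gets $\{|\xi_{n_{j_l}}-\xi|\ge\varepsilon\}\subseteq E_m$ for all large $l$, hence $\limsup_l M(|\xi_{n_{j_l}}-\xi|\ge\varepsilon)\le M(E_m)$ for every $m$, and letting $m\to\infty$ forces this limit to be $0$. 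This contradicts $M(|\xi_{n_{j_l}}-\xi|\ge\varepsilon)\ge\delta$, which completes the argument.

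The only genuinely delicate points are these two one-directional implications between uniform almost sure convergence and convergence in measure, and both reduce to countable sub-additivity of $M$; no continuity-from-above or deeper measure-theoretic input is needed. The main obstacle I anticipate is making the quantifier structure (``for every subsequence a further subsequence'') rigorous, and ensuring that the exceptional-set construction in the necessity part delivers the \emph{uniform} mode demanded by You's definition rather than merely pointwise almost sure convergence; the explicit tail sets $E_m=\bigcup_{k\ge m}A_k$ above are designed precisely to produce that uniformity.
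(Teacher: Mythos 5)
Your proof is correct, but there is essentially nothing in the paper to compare it against: the paper's entire ``proof'' of this lemma is the single sentence ``The proof is obtained by using Liu's notation'' followed by a citation, i.e.\ the result is quoted from the literature (it is You's subsequence criterion for uncertain sequences) rather than argued. Your write-up supplies exactly the standard proof of that cited result, and it also coincides in technique with the proof the paper \emph{does} write out later for the Riesz-type analogue of this lemma: there too, one direction extracts indices with $M\bigl(\lvert\xi_{n'_k}-\xi\rvert\ge 1/k\bigr)\le 2^{-k}$, bounds the tail unions $\bigcup_{k\ge m}A_k$ by $2^{-(m-1)}$ via countable subadditivity, and reads off uniform convergence off these exceptional sets, while the converse runs by contradiction through the fact that uniform almost-sure convergence forces convergence in measure. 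Two points you handle are worth stressing as genuinely necessary: (i) the only measure-theoretic inputs are monotonicity and countable subadditivity of $M$, both of which are axioms of Liu's uncertainty measure, so the argument survives the absence of countable additivity (which $M$ does not have); (ii) your reading of the statement's garbled quantifier as ``every subsequence admits a further subsequence converging uniformly almost surely'' is the right one --- under the literal reading (a single such subsequence exists) the sufficiency direction would be false, e.g.\ for a sequence equal to $\xi$ at even indices and $\xi+1$ at odd indices.
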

\begin{proof}
The proof is obtained by using Liu's notation \cite{lui8}.
\end{proof}

For simplicity, we will suppress the argument of the undefined variables $\gamma$ in most places; in event definitions, $\gamma$ will be written explicitly.

\section{Main Theorems}
Let \[
so=
  \Bigl\{
    (\xi_n):\;
    \forall \lambda>0,\;\forall \varepsilon>0,\;
    \lim_{n\to\infty}
    M\!\Bigl(
      \max_{n\le k\le (1+\lambda)n}
      \bigl|\xi_k-\xi_n\bigr| \;\ge\; \varepsilon
    \Bigr)
    = 0
  \Bigr\}.
\]The set $so$ is called the class of slow-oscillating sequences of
uncertain variables.
\begin{remark}
 In Tripathy–Dowari \cite{dowari} slow oscillation is formulated
as $\| M(\xi_m)- M(\xi_n)\|\to0$.  For bounded uncertain
variables the two formulations are equivalent; we use the measure form
because it fits the Tauber proof.
\end{remark}

For clarity of notation, we shall henceforth denote
\(f^{\varphi}_R,\, m^{\varphi}_R,\) and \(e^{\varphi}_R\)
simply by \(f_R,\, m_R,\) and \(e_R\), respectively,
whenever the Orlicz function is the identity \(\varphi(x)=x\).
All other sets appearing in the paper will be defined explicitly
when they are introduced.

\begin{theorem}\label{tauber}
Suppose $(\xi_n) \in m_R \cap so$ and 
\(\frac{n\,p_n}{P_n} \rightarrow 0.\)
Let $\xi$ denote the (unique) limit variable guaranteed by the
definition of $m_R$. Then
\[
   \xi_n \rightarrow \xi
   \quad\text{almost surely  (with respect to M).}
\]
\end{theorem}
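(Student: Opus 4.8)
The plan is to recover the behaviour of the terms $\xi_n$ from that of their Riesz means $\nu_n=\sum_{i=1}^{n}(p_i/P_n)\xi_i$ by a delayed–mean (de la Vallée Poussin) decomposition, and then to turn the resulting control into an almost–sure statement using the quantitative estimates collected in Lemmas \ref{lem1}, \ref{lem:BC} and \ref{lem:moment-decay}. The hypothesis $(\xi_n)\in m_R$ supplies $\nu_n\to\xi$, the hypothesis $(\xi_n)\in so$ forbids the terms from drifting over short blocks, and the weight condition $np_n/P_n\to0$ is precisely what quantifies the interaction between the two.

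For fixed $\lambda>0$ put $m=\lfloor(1+\lambda)n\rfloor$ and let $\bar\xi_{n,m}=(P_m-P_n)^{-1}\sum_{i=n+1}^{m}p_i\xi_i$ denote the block average. Since $(P_m-P_n)\bar\xi_{n,m}=P_m\nu_m-P_n\nu_n$, a direct rearrangement yields
\[
\xi_n-\xi=\bigl(\xi_n-\bar\xi_{n,m}\bigr)+\frac{P_m}{P_m-P_n}\,(\nu_m-\xi)-\frac{P_n}{P_m-P_n}\,(\nu_n-\xi).
\]
The first bracket is a convex combination of the differences $\xi_n-\xi_i$ with $n<i\le m$, hence is bounded by $\max_{n\le k\le(1+\lambda)n}|\xi_k-\xi_n|$, which is small in measure by $(\xi_n)\in so$. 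The remaining two terms are the errors $\nu_m-\xi$ and $\nu_n-\xi$, both tending to $0$ by $(\xi_n)\in m_R$, amplified by the factor $\mu^{-1}:=P_m/(P_m-P_n)$.

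The key is therefore to show that the amplification cannot destroy the decay of the means. Writing $P_m-P_n=\sum_{i=n+1}^{m}p_i$ and using $np_n/P_n\to0$ together with $P_n\to\infty$, one checks that $\mu=(P_m-P_n)/P_m\approx\lambda\,(np_n/P_n)$, so that $\mu^{-1}$ grows only at the deterministic and necessarily sub-polynomial rate $P_n/(np_n)$. Against this, Lemma \ref{lem:moment-decay} provides $E\!\left[|\nu_n-\xi|^{p}\right]\le Cn^{-(1+\delta)}$; inserting this into Lemma \ref{lem1} with $\varphi(x)=x^{p}$ gives the summable tail bound $M(|\nu_n-\xi|\ge t)\le C t^{-p}n^{-(1+\delta)}$, whence Borel–Cantelli (Lemma \ref{lem:BC}) yields $\nu_n\to\xi$ almost surely with a polynomial rate $|\nu_n-\xi|=O(n^{-\eta})$ for some $\eta>0$. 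Because a polynomial rate beats the sub-polynomial growth of $\mu^{-1}$, the two amplified terms tend to $0$ almost surely, and a choice of $\lambda=\lambda_n\to0$ keeps the block term small while still letting slow oscillation apply; combining the three pieces gives $\xi_n\to\xi$ almost surely.

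The step I expect to be the main obstacle is exactly this balancing act. Slow oscillation controls the block term only when $m/n\to1$, which pushes toward small $\lambda$ and hence large $\mu^{-1}$, whereas damping the amplified error terms pushes toward larger blocks; reconciling the two requires matching the sub-polynomial blow-up rate $P_n/(np_n)$ against the polynomial decay rate from Lemma \ref{lem:moment-decay}, and this is where $np_n/P_n\to0$ must be used quantitatively rather than qualitatively. A secondary difficulty is that both $so$ and $m_R$ are stated \emph{in measure}, so upgrading the block term to an almost-sure bound along the chosen scale $\lambda_n$ will require extracting a subsequence and reapplying Borel–Cantelli.
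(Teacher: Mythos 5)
Your delayed-mean decomposition is algebraically correct and is the natural Tauberian skeleton, and your treatment of the block term $\xi_n-\bar\xi_{n,m}$ via $so$ is fine; the gaps are in how you control the two amplified error terms. First, you cannot invoke Lemma~\ref{lem:moment-decay} here. Its hypotheses are pointwise statements on a set of full measure: (i) $S_n(\gamma)\to\xi(\gamma)$ almost surely, (ii) almost-sure vanishing of the block oscillation $\sup_{n\le k\le(1+\lambda)n}\lvert\xi_k(\gamma)-\xi(\gamma)\rvert$, and (iii) a uniform moment bound $\sup_n E[\lvert\xi_n\rvert^{p}]<\infty$ for some $p>1$. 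None of these is available under the theorem's assumptions: $m_R$ and $so$ are \emph{in-measure} hypotheses, and the theorem assumes no moment bounds at all. Worse, hypothesis (i) is itself an almost-sure convergence statement for the means, essentially the kind of conclusion your Markov--Borel--Cantelli step (Lemmas~\ref{lem1} and~\ref{lem:BC}) is supposed to manufacture, so using the lemma to produce the summable tail bound $M(\lvert\nu_n-\xi\rvert\ge t)\le Ct^{-p}n^{-(1+\delta)}$ is circular. Without that bound there is no polynomial almost-sure rate $\lvert\nu_n-\xi\rvert=O(n^{-\eta})$, and the balancing argument has no input.

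Second, the quantitative claim that the amplification $P_m/(P_m-P_n)$ grows only sub-polynomially is false. The approximation $P_m-P_n\approx\lambda np_n$ already requires a regularity of the weights that is not assumed; and even setting that aside, the hypotheses $np_n/P_n\to0$ and $P_n\to\infty$ put no polynomial ceiling on $P_m/(P_m-P_n)$. In fact $np_n/P_n\to0$ forces $P_{(1+\lambda)n}/P_n\to1$, so the amplification always blows up, and its rate can be made super-polynomial along a subsequence: arrange the weights so that $P_{(1+\lambda)^{j+1}}/P_{(1+\lambda)^{j}}-1$ equals $1/j$ for even $j$ (so $P_n\to\infty$) and $2^{-2^{j}}$ for odd $j$ (so the amplification near $n\approx(1+\lambda)^{j}$ is about $2^{2^{j}}$), spreading each increment uniformly over its block so that $np_n/P_n\to0$ still holds. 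Then ``polynomial beats sub-polynomial'' collapses and your two error terms cannot be dismissed. Note finally that the paper's own argument is structurally different from yours: it never forms the delayed mean, but applies Borel--Cantelli along $n_j=\lfloor(1+\lambda)^{j}\rfloor$ to the events $\{\lvert S_{n_j}-\xi\rvert\ge\varepsilon/2\}$ and $\{\max_{n_j\le k\le(1+\lambda)n_j}\lvert\xi_k-\xi\rvert\ge\varepsilon/2\}$ and then glues blocks by the triangle inequality; your route is closer to classical Tauberian practice, but as proposed it cannot be completed from the stated hypotheses without substantially new quantitative input.
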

\begin{proof}
Suppose that $(\xi_n)\in m_R\cap so$ and 
\(\dfrac{n\,p_n}{P_n}\to0\).
Let $S_n:=\tfrac1{P_n}\sum_{i=1}^{n}p_i\xi_i$ and denote by 
$\xi$ the (unique) $m_R$–limit of $(\xi_n)$.

Because $(\xi_n)\in m_R$ (with $\varphi(x)=x$),
for every $\varepsilon>0$ we have
 for every $\varepsilon>0$,
\[
   M\!\Bigl(
      \!\lvert S_n-\xi\rvert\ge\varepsilon
   \Bigr)\rightarrow 0, n\to\infty.
   \tag{4.1}
\]

On the other hand, since $(\xi_n)\in  so$,   
slow oscillation gives
\[
  \sup_{n\le k\le(1+\lambda)n}\!
  \lvert\xi_k-\xi\rvert
  \;\rightarrow 0, n\to\infty
\] for  fix $\lambda>0$ and $\varepsilon>0$.
Hence
\[
  M\!\Bigl(
    \max_{n\le k\le(1+\lambda)n}
    \lvert\xi_k-\xi\rvert\;\ge\;\varepsilon
  \Bigr)\rightarrow 0, n\to\infty.  \tag{4.2}
\]

Choose integers \(n_j=\lfloor(1+\lambda)^j\rfloor\).
Because of \((4.1)\)–\((4.2)\) we can pick $j_0$ so that  
\(M(A_j\cup B_j)\le 2^{-j-2}\) for all \(j\ge j_0\), where
\[
  A_j:=\{|S_{n_j}-\xi|\ge \varepsilon/2\},\qquad
  B_j:=\Bigl\{\max_{n_j\le k\le(1+\lambda)n_j}
                |\xi_k-\xi|\ge \varepsilon/2\Bigr\}.
\]
Then \(\sum_j M(A_j\cup B_j)<\infty\); by Borel–Cantelli,
only finitely many of the events \(A_j\) or \(B_j\) occur.

For \(\gamma\) outside this null set pick \(J(\gamma)\) so that
\(j\ge J(\gamma)\Rightarrow\gamma\notin A_j\cup B_j\).
For any \(n\in[n_j,(1+\lambda)n_j]\) with \(j\ge J\) we have
\[
  |\xi_n(\gamma)-\xi(\gamma)|
  \;\le\;
  |\xi_n-\xi_{n_j}|+|\xi_{n_j}-\xi|
  \;\le\;
  \tfrac{\varepsilon}{2}+\tfrac{\varepsilon}{2}
  =\varepsilon.
\]

Because every integer \(n\) belongs to at most $\lceil1+\lambda\rceil$
of the overlapping blocks \([n_j,(1+\lambda)n_j]\), 
the above bound extends to all but finitely many \(n\).
Since \(\varepsilon>0\) was arbitrary,
we have \(
   \xi_n \rightarrow \xi\) almost surely with respect to M.
\end{proof}

 \begin{theorem}\label{teo121}
  If the sequence the $(\xi_n(\gamma))$ is convergent in almost sure  to $\xi(\gamma)$  then $(\xi_n(\gamma))$ is also Riesz type convergent in almost sure to $\xi(\gamma)$  but conversely not true, generally.
\end{theorem}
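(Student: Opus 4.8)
The plan is to prove the inclusion $f\subset f_R$ by transporting the classical regularity of the Riesz summability method to the uncertainty setting pointwise, and then to separate the two classes with an explicit oscillating sequence. The key observation is that both membership in $f$ and in $f_R$ are defined $\gamma$-wise on the fixed full-measure set $\Lambda$ with $M(\Lambda)=1$, so the almost-sure statement reduces to an ordinary statement about real sequences indexed by $\gamma\in\Lambda$.

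For the forward direction I would fix $\gamma\in\Lambda$, so that $(\xi_n(\gamma))$ is an ordinary convergent real sequence with $\xi_n(\gamma)\to\xi(\gamma)$. Writing $\nu_n(\gamma)-\xi(\gamma)=\frac{1}{P_n}\sum_{i=1}^{n}p_i\bigl(\xi_i(\gamma)-\xi(\gamma)\bigr)$, I split the sum at a fixed index $N=N(\gamma,\varepsilon)$ beyond which $|\xi_i(\gamma)-\xi(\gamma)|<\varepsilon/2$. Since every $p_i>0$, the tail contributes at most $\frac{\varepsilon}{2}\cdot\frac{1}{P_n}\sum_{i=N+1}^{n}p_i\le\varepsilon/2$, while the head $\frac{1}{P_n}\sum_{i=1}^{N}p_i|\xi_i(\gamma)-\xi(\gamma)|$ has a fixed numerator and therefore falls below $\varepsilon/2$ for all large $n$, using $P_n\to\infty$. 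Hence $\nu_n(\gamma)\to\xi(\gamma)$ for every $\gamma\in\Lambda$, which is precisely $(\xi_n)\in f_R$.

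For the converse I would exhibit a single deterministic counterexample valid for all $\gamma$. Taking the crisp uncertain variables $\xi_n(\gamma)=(-1)^n$ with Cesàro weights $p_i=1$ (so $P_n=n$), one computes $\nu_n(\gamma)=\frac{1}{n}\sum_{i=1}^{n}(-1)^i$, which equals $0$ for even $n$ and $-1/n$ for odd $n$. Thus $\nu_n(\gamma)\to 0$ for every $\gamma$, giving $(\xi_n)\in f_R$ with limit $\xi\equiv 0$, whereas $((-1)^n)$ has no ordinary limit and so $(\xi_n)\notin f$. This shows the inclusion is strict and that the converse fails in general.

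The only genuine subtlety is the role of $P_n\to\infty$: without it the head term need not vanish and Riesz regularity can break down. I would therefore invoke it as the standing hypothesis on $(p_i)$, consistent with the Riesz-matrix definition \eqref{riesz_matrix} and with Lemma \ref{lem:moment-decay}. Everything else is a routine pointwise application of the Silverman–Toeplitz regularity conditions, carried over to the uncertainty framework because convergence in both $f$ and $f_R$ is defined $\gamma$-wise on $\Lambda$.
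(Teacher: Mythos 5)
Your proposal is correct and follows essentially the same route as the paper: a pointwise (Toeplitz-regularity) argument for the inclusion $f\subset f_R$, followed by a deterministic oscillating sequence with unit weights $p_i=1$ whose Ces\`aro means converge, showing $f_R\nsubseteq f$. In fact your forward direction is slightly more careful than the paper's own: the paper applies the bound $|\xi_i(\gamma)-\xi(\gamma)|<\epsilon$ to \emph{all} indices $1\le i\le n$ and never addresses the finitely many head terms, whereas your split at $N(\gamma,\varepsilon)$ together with the hypothesis $P_n\to\infty$ is exactly what is needed to make that step rigorous.
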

 \begin{proof}
 Let us suppose that $(\xi_n(\gamma))\in f$. Then there is exists an integer $n_0$ such that $|\xi_n(\gamma)-\xi(\gamma)|\leq \epsilon$ for $n\geq n_0$. 
We will  show that $(\xi_n(\gamma))\in f_R$ that is \( \lim_{n \to \infty} t_n(\gamma) = \xi(\gamma) \) where $t_n(\gamma)=\frac{1}{P_n} \sum_{i=1}^n p_i \xi_i(\gamma)$.
If we consider \( |\xi_n(\gamma) - \xi(\gamma)| < \epsilon \) for large \( i \), then we get
$$|t_n(\gamma) - \xi(\gamma)| \leq \frac{1}{P_n} \sum_{i=1}^n p_i |\xi_i(\gamma) - \xi(\gamma)| < \frac{1}{P_n} \sum_{i=1}^n p_i \epsilon=\epsilon $$
 which shows that
$\lim_{n \to \infty} t_n (\gamma)= \xi(\gamma),$ that is $(\xi_n(\gamma))\in f_R$.

Let $\Gamma=\{\gamma_1\}$ with $M(\{\gamma_1\})=1$.  Define the sequence of uncertain variables
\[
  \xi_n(\gamma_1)
  = 
  \begin{cases}
    1, & n\text{ odd},\\
    0, & n\text{ even}.
  \end{cases}
\]
Then \(\displaystyle \xi_n(\gamma_1)\) does not converge as \(n\to\infty\), so \((\xi_n)\notin f\). Take Riesz weights \(p_n=1\) for all \(n\).  Then \(P_n = \sum_{i=1}^n p_i = n\), and the Riesz (Cesàro) means are
    \[
      R_n(\xi)(\gamma_1)
      \;=\;
      \frac{1}{n} \sum_{k=1}^n \xi_k(\gamma_1)
      \;=\;
      \begin{cases}
        \tfrac12 + \tfrac{1}{2n}, & n\text{ odd},\\
        \tfrac12,                 & n\text{ even}.
      \end{cases}
    \]
    Hence 
    \(\lim_{n\to\infty} R_n(\xi)(\gamma_1) = \tfrac12\), and so \((\xi_n)\in f_R\).

Therefore \((\xi_n)\in f_R\) but \((\xi_n)\notin f\), showing that \(f_R\nsubseteq f\).
This is  the proof of the Theorem \ref{teo121}.
 \end{proof}
 
 Similar to the proof used in the theorem above, it can be proven that $e\subset e_R$, $m\subset m_R$ and $d\subset d_R$.
\begin{theorem}
If $(\xi_n(\gamma))\in e_R$ then $(\xi_n(\gamma))\in m_R$ that is the inclusion $e_R\subseteq m_R$ is valid.
 \end{theorem}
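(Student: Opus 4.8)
The plan is to deduce convergence in measure directly from convergence in mean by applying the Markov-type inequality already established in Lemma~\ref{lem1}. The statement $e_R\subseteq m_R$ is the Riesz-type analogue of Liu's classical inclusion $e\subset m$, and the Orlicz function $\varphi$ that sits inside both definitions makes Lemma~\ref{lem1} exactly the right tool: it controls the uncertainty measure of a superlevel set by the expected value of $\varphi$ applied to the same quantity.

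First I would fix an arbitrary sequence $(\xi_n(\gamma))\in e_R$ and let $\xi(\gamma)$ be its $e_R$-limit. Writing $\nu_n(\gamma)=\sum_{i=1}^{n}\frac{p_i}{P_n}\xi_i(\gamma)$ as in~\eqref{d4}, the defining condition~\eqref{ms3} gives
\[
\lim_{n\to\infty}E\!\Bigl[\varphi\bigl(\lvert \nu_n(\gamma)-\xi(\gamma)\rvert\bigr)\Bigr]=0.
\]
Next, for an arbitrary $\epsilon>0$ I would apply Lemma~\ref{lem1} to the uncertain variable $\nu_n(\gamma)-\xi(\gamma)$ with threshold $t$ chosen so that $\varphi(t)=\epsilon$; since $\varphi$ is continuous, strictly increasing, $\varphi(0)=0$ and $\varphi(x)\to\infty$, such a $t=t(\epsilon)>0$ exists and is unique. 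Lemma~\ref{lem1} then yields
\[
M\!\Bigl(\varphi\bigl(\lvert \nu_n(\gamma)-\xi(\gamma)\rvert\bigr)\ge \epsilon\Bigr)
=M\!\Bigl(\lvert \nu_n(\gamma)-\xi(\gamma)\rvert\ge t\Bigr)
\le \frac{E\!\bigl[\varphi(\lvert \nu_n(\gamma)-\xi(\gamma)\rvert)\bigr]}{\varphi(t)}
=\frac{E\!\bigl[\varphi(\lvert \nu_n(\gamma)-\xi(\gamma)\rvert)\bigr]}{\epsilon}.
\]
Letting $n\to\infty$, the numerator tends to $0$ while $\epsilon$ is fixed, so the left-hand side tends to $0$, which is precisely the defining condition~\eqref{ms2} for membership in $m_R$.

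The main subtlety, rather than any deep obstacle, is reconciling the two slightly different ways the superlevel set can be written. The definition~\eqref{ms2} of $m_R$ measures the event $\{\varphi(\lvert\nu_n-\xi\rvert)\ge\epsilon\}$, whereas Lemma~\ref{lem1} is phrased for the event $\{\lvert\nu_n-\xi\rvert\ge t\}$. I would make explicit that because $\varphi$ is strictly increasing these two events coincide when $\varphi(t)=\epsilon$, so no information is lost in passing between them; this is the step where the hypotheses on $\varphi$ (strict monotonicity and the range condition guaranteeing $t$ exists for every $\epsilon$) are genuinely used. A minor point to note is that Lemma~\ref{lem1} presupposes $E[\varphi(\lvert\nu_n\rvert)]<\infty$; here the $e_R$ hypothesis guarantees finiteness of $E[\varphi(\lvert\nu_n-\xi\rvert)]$ for all large $n$, so the lemma applies to $\nu_n-\xi$ with no extra assumption, and the conclusion follows for every $\epsilon>0$.
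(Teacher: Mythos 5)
Your proof is correct and takes essentially the same route as the paper's: both deduce Riesz convergence in measure from Riesz convergence in mean via a Markov-type inequality applied to $\nu_n(\gamma)-\xi(\gamma)$. The only difference is bookkeeping — you invoke Lemma~\ref{lem1} with a threshold $t$ satisfying $\varphi(t)=\epsilon$ (thereby also covering a general Orlicz function), whereas the paper re-derives the same inequality inline from the integral representation of the expected value in the identity case $\varphi(x)=x$.
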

 \begin{proof}
Let us suppose that $(\xi_n(\gamma)) \in e_R$. By the definition of $e_R$, this implies that
\[
\lim_{n \to \infty}E\left[ \left|\sum_{i=1}^n \frac{p_i}{P_n} \xi_i(\gamma) - \xi(\gamma) \right| \geq \epsilon \right] = 0, \quad \text{for all } \epsilon > 0.
\]

To relate this to the expected value condition in $e_R$, observe that by the definition of expected value,
\begin{align*}
&E\left[\left|\sum_{i=1}^n \frac{p_i}{P_n} \xi_i(\gamma) - \xi(\gamma) \right|\right] = \int_{0}^{+\infty} M\left\{ \left|\sum_{i=1}^n \frac{p_i}{P_n} \xi_i(\gamma) - \xi(\gamma) \right| \geq x \right\} dx \\
&\geq \int_{0}^{\epsilon} M\left\{ \left|\sum_{i=1}^n \frac{p_i}{P_n} \xi_i(\gamma) - \xi(\gamma) \right| \geq x \right\} dx 
\geq \int_{0}^{\epsilon} M\left\{ \left|\sum_{i=1}^n \frac{p_i}{P_n} \xi_i(\gamma) - \xi(\gamma) \right| \geq \epsilon \right\} dx \\
&= \epsilon \, M\left\{ \left|\sum_{i=1}^n \frac{p_i}{P_n} \xi_i(\gamma) - \xi(\gamma) \right| \geq \epsilon \right\}.
\end{align*}

Therefore, we can write
\[
M\left\{ \left|\sum_{i=1}^n \frac{p_i}{P_n} \xi_i(\gamma) - \xi(\gamma) \right| \geq \epsilon \right\} \leq \frac{E\left[\left|\sum_{i=1}^n \frac{p_i}{P_n} \xi_i(\gamma) - \xi(\gamma) \right|\right]}{\epsilon}.
\]

Since $(\xi_n(\gamma)) \in e_R$, it follows that
\[
\lim_{n \to \infty} E\left[ \left|\sum_{i=1}^n \frac{p_i}{P_n} \xi_i(\gamma) - \xi(\gamma) \right| \geq \epsilon \right] = 0.
\]
Thus, by the above inequality, we conclude that
\[
\lim_{n \to \infty} M\left\{\left|\sum_{i=1}^n \frac{p_i}{P_n} \xi_i(\gamma) - \xi(\gamma) \right|\geq\epsilon\right\} = 0.
\]
This implies that $(\xi_n(\gamma)) \in m_R$, and hence $e_R \subseteq m_R$.
This completes the proof.\end{proof}

 As is well known, the Riesz matrix is a regular summability method. Considering the sets \( f_R \), \( m_R \), and \( e_R \) obtained using the domain of this matrix, it can be easily seen that \( f \subseteq f_R \),  \( m \subseteq m_R \) and \( e \subseteq e_R \).  These statements imply that it is possible to create new sets of uncertain variables in the domain of the given matrix between the sets \( e \) and \( m \), and between the sets \( m \) and \( f \), with an infinite number of uncertain variables.

Let's state the following proposition without proof:
\begin{prop} Let $A$ be a regular summability method of real or complex numbers. Then the inclusions  \((\xi_n(\gamma))\in e\Rightarrow (\xi_n(\gamma))\in e_A$,  $(\xi_n(\gamma))\in m\Rightarrow (\xi_n(\gamma))\in m_A$ and $ (\xi_n(\gamma))\in f\Rightarrow (\xi_n(\gamma))\in f_A\) are holds.
\end{prop}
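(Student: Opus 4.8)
The plan is to reduce every case to the scalar Silverman--Toeplitz theorem, which characterises a regular matrix $A=(a_{nk})$ by the three conditions $\sup_{n}\sum_{k}|a_{nk}|=:H<\infty$, $\lim_{n}a_{nk}=0$ for each fixed $k$, and $\lim_{n}\sum_{k}a_{nk}=1$. Throughout I would work from the single decomposition
\[
(A\xi)_n-\xi=\sum_{k}a_{nk}\bigl(\xi_k-\xi\bigr)+b_n\,\xi,\qquad b_n:=\sum_{k}a_{nk}-1\to0,
\]
so that each mode of convergence splits into a \emph{main} weighted term and a \emph{defect} term $b_n\xi$ that is easy to dispose of because $b_n\to0$.

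For $f$ (almost sure) the argument is immediate and uses nothing beyond the scalar theorem. Fix $\gamma\in\Lambda$ with $M(\Lambda)=1$. Since $(\xi_n(\gamma))\in f$, the real sequence $(\xi_n(\gamma))$ converges to $\xi(\gamma)$, and regularity of $A$ forces $(A\xi)_n(\gamma)=\sum_k a_{nk}\xi_k(\gamma)\to\xi(\gamma)$. As this holds for every $\gamma\in\Lambda$, we obtain $(\xi_n)\in f_A$.

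For $e$ (in mean) I would bound pointwise by the triangle inequality, $|(A\xi)_n-\xi|\le\sum_k|a_{nk}|\,|\xi_k-\xi|+|b_n|\,|\xi|$, apply monotonicity of the expectation, and then use additivity of the expected-value operator over the weighted sum (Liu's linearity theorem for independent uncertain variables) to get $E[|(A\xi)_n-\xi|]\le\sum_k|a_{nk}|\,t_k+|b_n|\,E[|\xi|]$, where $t_k:=E[|\xi_k-\xi|]\to0$. The first sum tends to $0$ by the $L^{1}$ form of the Toeplitz lemma (bounded row sums plus vanishing columns applied to a null sequence), and the second vanishes since $b_n\to0$ and $E[|\xi|]<\infty$; hence $(\xi_n)\in e_A$. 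The reliance on linearity of $E$ is the one structural assumption I would flag here.

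The case $m$ (in measure) is where I expect the real difficulty. The defect term is harmless, since $M(|b_n\xi|\ge\varepsilon/2)=M(|\xi|\ge\varepsilon/(2|b_n|))\to0$ as $b_n\to0$. For the main term I would fix $K$ and split $\sum_k a_{nk}(\xi_k-\xi)$ into a head $\sum_{k\le K}$ and a tail $\sum_{k>K}$; the head tends to $0$ in measure as $n\to\infty$ for each fixed $K$ because every column vanishes and $M$ is finitely subadditive. The obstacle is the tail: choosing $\delta<\varepsilon/(2H)$ yields the inclusion
\[
\Bigl\{\bigl|\textstyle\sum_{k>K}a_{nk}(\xi_k-\xi)\bigr|\ge\varepsilon/2\Bigr\}\subseteq\bigcup_{k>K}\{\,|\xi_k-\xi|\ge\delta\,\},
\]
but making the right-hand measure small requires $\sum_{k>K}M(|\xi_k-\xi|\ge\delta)\to0$, equivalently $M(\sup_{k>K}|\xi_k-\xi|\ge\delta)\to0$ as $K\to\infty$ --- a summable-tail (complete) or almost-uniform strengthening that is strictly stronger than the plain convergence in measure guaranteed by $(\xi_n)\in m$. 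Closing this gap is the crux: term-by-term smallness of $M(|\xi_k-\xi|\ge\delta)$ cannot by itself tame an infinite weighted tail, so one must either reinforce the hypothesis, invoke uniform integrability, or deploy the uniform-almost-sure subsequence lemma to upgrade the tail control before the averaging can be pushed through.
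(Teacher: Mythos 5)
Your handling of the $f$-inclusion is correct, and it is essentially the only argument the paper itself supplies: the Proposition is stated \emph{without proof}, with the paper asserting earlier that the $e$ and $m$ cases follow ``similar to'' the pointwise averaging proof of Theorem~\ref{teo121} ($f\subset f_R$). Your Silverman--Toeplitz version is the clean generalization of that proof to an arbitrary regular matrix, and since the scalar argument runs separately for each $\gamma\in\Lambda$, nothing about uncertainty measures intervenes. But your instinct that the other two cases are \emph{not} ``similar'' is exactly right, and the two difficulties you flag are genuine, not technicalities. For $e$: in Liu's theory the expected value operator is neither additive nor even subadditive for dependent uncertain variables; $E[a\xi+b\eta]=aE[\xi]+bE[\eta]$ is a theorem only under independence. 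So the step $E\bigl[\bigl|\sum_k a_{nk}(\xi_k-\xi)\bigr|\bigr]\le\sum_k|a_{nk}|\,E[|\xi_k-\xi|]$ --- and even the two-term split isolating the defect $b_n\xi$ --- is unjustified for a general sequence in $e$. Your Toeplitz argument closes only if one adds an independence hypothesis or restricts to additive (probability-like) uncertainty measures.

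For the $m$-inclusion, the gap you could not close is not a defect of your proof strategy: the statement itself is false in general, even for the Cesàro matrix ($p_i=1$). A probability space satisfies all of Liu's axioms (normality, duality, countable subadditivity), so probabilistic counterexamples live inside uncertainty theory. Take $\Gamma=[0,1]$ with Lebesgue measure, independent events $A_k$ with $M(A_k)=1/k$, and let $\xi_k=k^2$ on $A_k$ and $\xi_k=0$ otherwise, with $\xi=0$. Then $M(|\xi_k|\ge\varepsilon)=1/k\to0$, so $(\xi_k)\in m$. But the Cesàro mean $C_n=\frac{1}{n}\sum_{k\le n}\xi_k$ satisfies $C_n\ge n/4$ whenever some $A_k$ with $n/2\le k\le n$ occurs, and by independence that event has measure at least $1-\exp\bigl(-\sum_{n/2\le k\le n}1/k\bigr)\to 1/2$. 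Hence for any a.s.\ finite candidate limit $\eta$ one has $M(|C_n-\eta|\ge1)\ge M(C_n\ge n/4)-M(\eta>n/4-1)$, which stays bounded away from $0$; so $(\xi_k)\notin m_C$. This is precisely your diagnosis made concrete: term-by-term smallness of $M(|\xi_k-\xi|\ge\delta)$ cannot control the weighted tail, and no head/tail bookkeeping will fix it. The $m$-part of the Proposition (and likewise the paper's unproved claim $m\subset m_R$) requires an additional hypothesis --- e.g.\ boundedness of the sequence, or $\sum_k M(|\xi_k-\xi|\ge\delta)<\infty$ for every $\delta>0$, or the slow-oscillation condition the paper imposes in its Tauberian theorem --- before any averaging argument can go through.
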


With this way,  new sets of uncertain variables can be constructed from the sets 
$f$, $m$ and $e$.

  \begin{theorem}\label{iso}
The sequence spaces of uncertain variables $f_R$, $m_R$ and $e_R$ are linearly isomorphic to the
spaces $f$, $m$ and $e$, respectively.
\end{theorem}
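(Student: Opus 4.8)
The plan is to exhibit an explicit linear bijection implemented by the Riesz matrix itself and to observe that the defining conditions of \(f_R\), \(m_R\), \(e_R\) are, tautologically, the images under this map of the defining conditions of \(f\), \(m\), \(e\). Define the \emph{Riesz transform} \(T\) on sequences of uncertain variables by \((T\xi)_n = \nu_n = \sum_{i=1}^{n}\frac{p_i}{P_n}\xi_i\). Since each \(\nu_n\) is a finite linear combination of uncertain variables, it is again an uncertain variable, and \(T\) is linear because it is given by multiplication by the fixed matrix \(R\) of \eqref{riesz_matrix}.

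The key algebraic fact is that \(R\) is lower triangular with strictly positive diagonal entries \(r_{nn}=p_n/P_n>0\), hence invertible. I would invert it by hand: from \(P_n\nu_n=\sum_{i=1}^n p_i\xi_i\) and \(P_{n-1}\nu_{n-1}=\sum_{i=1}^{n-1}p_i\xi_i\) (with the convention \(P_0=0\), \(\nu_0=0\)) subtraction gives \(p_n\xi_n=P_n\nu_n-P_{n-1}\nu_{n-1}\), so
\[
  \xi_n=\frac{P_n}{p_n}\,\nu_n-\frac{P_{n-1}}{p_n}\,\nu_{n-1}.
\]
This formula defines a linear operator \(S\) (again a difference of finite linear combinations of uncertain variables, hence valued in uncertain variables), and a direct check shows \(ST=TS=\mathrm{Id}\) on the space of all uncertain sequences.

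With \(T\) and \(S\) in hand, the isomorphism claims follow by unwinding the definitions \eqref{ms1}, \eqref{ms2}, \eqref{ms3}. By construction \((\xi_i)\in f_R\) if and only if \((\nu_n)=(T\xi)\) converges almost surely to the limit variable \(\xi\), i.e.\ \((T\xi)\in f\); likewise \((\xi_i)\in m_R\iff (T\xi)\in m\) and \((\xi_i)\in e_R\iff (T\xi)\in e\). Thus \(T\) maps \(f_R\) into \(f\), \(m_R\) into \(m\), \(e_R\) into \(e\); injectivity on each is immediate from \(ST=\mathrm{Id}\); and surjectivity follows because, given any \((\nu_n)\) in \(f\) (resp.\ \(m\), \(e\)), the sequence \((\xi_i)=S\nu\) satisfies \(T\xi=\nu\), whence \((\xi_i)\) lies in \(f_R\) (resp.\ \(m_R\), \(e_R\)). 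Therefore the restriction of \(T\) is in each case a linear bijection onto the stated classical space.

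The routine parts are the verification of \(ST=TS=\mathrm{Id}\) and of linearity. The only point requiring genuine care---the step I expect to be the main obstacle---is confirming that the reconstruction \(S\) does not leave the relevant class: one must check that if the Riesz means \((\nu_n)\) converge in the chosen mode, then the reconstructed \((\xi_i)=S\nu\) is a bona fide uncertain sequence whose Riesz transform is exactly \((\nu_n)\), so that its membership in \(f_R\), \(m_R\) or \(e_R\) is guaranteed by the defining condition rather than by any additional convergence of \((\xi_i)\) itself. Because \(f_R\), \(m_R\), \(e_R\) impose conditions only on the transform, this surjectivity is in fact automatic once the algebraic inversion is established; the positivity \(p_k>0\) is exactly what makes the inversion legitimate.
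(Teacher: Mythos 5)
Your proposal is correct and follows essentially the same route as the paper's own proof: both use the Riesz matrix itself as the linear bijection $T$, with injectivity and surjectivity coming from the invertibility of the lower-triangular matrix $R$. The only difference is that you make the paper's appeal to the existence of $R^{-1}$ explicit by writing out the inversion formula $\xi_n=\frac{P_n}{p_n}\nu_n-\frac{P_{n-1}}{p_n}\nu_{n-1}$, which is a welcome (but not conceptually different) refinement.
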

\begin{proof}
 In order to prove the fact $f_R\cong f$ , we should show the existence of a linear
bijection between  the spaces $f_R$ and $f$. Consider the transformation
$T$ defined, with the notation of (\ref{d4}), from $f_R$ to
$f$ by $\xi(\gamma)\mapsto T\xi(\gamma)=\eta(\gamma). $ The linearity of $T$ is clear. Further,
it is trivial that $\xi(\gamma)=\theta(\gamma)=(0,0,...)$ whenever $T\xi(\gamma)=\theta(\gamma)$ and
hence $T$ is injective. It remains to show that T is surjective. Since the matrix $R$ is a lower triangular matrix, it has an $R^{-1}$ inverse.  It is clear that for every $(\eta_n(\gamma)) \in f$, there is an element $(\xi_n(\gamma)) \in f_R$ such that $\xi_n(\gamma)=R^{-1}\eta_n(\gamma)$. That is  $T$ is onto from $f_R$ to $f$. The $m_R \cong m$ and $e_R \cong e$ can be proved in a similar way, so we omit them.
\end{proof}

  Let $(\xi_n(\gamma))$ be a sequence uncertain variables and  define the set $\tilde{m}$ as follows: {\small \begin{equation}\tilde{m}=\left\{(\xi_n(\gamma)):M\left(\bigcap_{m=1}^{\infty}\bigcup_{n=m}^{\infty}\{\gamma\in\Gamma:|\sum_{i=1}^n\frac{p_i}{P_n}\xi_i(\gamma)-\xi(\gamma)|\geq\epsilon\}\right)=0, ~~\text{for all}~~\epsilon>0\right\}.
  \end{equation}}
 \begin{prop}\label{pro1}
 The condition  $(\xi_n(\gamma))\in f_R$ if and only if $(\xi_n(\gamma))\in \tilde{m}$ is holds.
 \end{prop}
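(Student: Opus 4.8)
The plan is to recognise that $\tilde m$ is precisely the ``infinitely often'' (limsup) reformulation of almost-sure convergence, transported to the uncertainty setting, while $f_R$ (with $\varphi(x)=x$) is exactly almost-sure convergence of the Riesz means. To make this explicit I would write $\nu_n(\gamma)=\sum_{i=1}^n\frac{p_i}{P_n}\xi_i(\gamma)$ and, for $\epsilon>0$, set $E_n(\epsilon)=\{\gamma\in\Gamma:|\nu_n(\gamma)-\xi(\gamma)|\ge\epsilon\}$, so that the event in the definition of $\tilde m$ is $\limsup_n E_n(\epsilon)=\bigcap_{m=1}^\infty\bigcup_{n=m}^\infty E_n(\epsilon)$, the set of $\gamma$ lying in infinitely many $E_n(\epsilon)$. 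The single fact driving both implications is that the set where $\nu_n(\gamma)\not\to\xi(\gamma)$ equals $D:=\bigcup_{k=1}^\infty\limsup_n E_n(1/k)$, since convergence fails at $\gamma$ exactly when $|\nu_n(\gamma)-\xi(\gamma)|\ge 1/k$ infinitely often for some $k$.

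For the forward implication I would assume $(\xi_n)\in f_R$, so that there is $\Lambda$ with $M(\Lambda)=1$ and $\nu_n(\gamma)\to\xi(\gamma)$ for every $\gamma\in\Lambda$. Fixing $\epsilon>0$, each $\gamma\in\Lambda$ lies in only finitely many $E_n(\epsilon)$, whence $\limsup_n E_n(\epsilon)\subseteq\Gamma\setminus\Lambda$; the duality axiom gives $M(\Gamma\setminus\Lambda)=0$, and monotonicity of $M$ forces $M(\limsup_n E_n(\epsilon))=0$. Since $\epsilon$ was arbitrary this yields $(\xi_n)\in\tilde m$.

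For the converse I would assume $(\xi_n)\in\tilde m$, so $M(\limsup_n E_n(1/k))=0$ for every $k$. Using that $M$ is countably sub-additive — the standing hypothesis behind the Borel--Cantelli lemma above — I would estimate
\[
M(D)=M\Bigl(\bigcup_{k=1}^\infty\limsup_n E_n(1/k)\Bigr)\le\sum_{k=1}^\infty M\bigl(\limsup_n E_n(1/k)\bigr)=0,
\]
and then invoke duality to obtain $M(\Gamma\setminus D)=1$. Since $\nu_n(\gamma)\to\xi(\gamma)$ for every $\gamma\in\Gamma\setminus D$, taking $\Lambda=\Gamma\setminus D$ shows $(\xi_n)\in f_R$.

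The reduction from an arbitrary $\epsilon>0$ to the countable family $\{1/k\}$ is routine: since $E_n(\epsilon)\subseteq E_n(\epsilon')$ whenever $\epsilon\ge\epsilon'$, one has $\limsup_n E_n(\epsilon)\subseteq\limsup_n E_n(\epsilon')$, so monotonicity of $M$ makes the two families of conditions equivalent. The one point demanding care — the main obstacle — is that an uncertainty measure is only \emph{countably sub-additive} rather than additive, so the classical limsup characterisation of almost-sure convergence cannot be quoted verbatim. It is the converse direction where this bites: the bound $M(D)\le\sum_k M(\limsup_n E_n(1/k))$ is all the additivity available, and sub-additivity happily suffices; both directions additionally rely on the duality axiom to pass between the measure-zero ``bad'' set $D$ and the measure-one ``good'' set $\Gamma\setminus D$.
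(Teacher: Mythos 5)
Your argument is correct, and it follows the route the paper merely gestures at: the paper's entire proof of this proposition is the single sentence that the claim ``is easily obtained from the definition'' of $f_R$ in \eqref{ms1}, with no argument supplied. What you add is exactly the content that one-liner omits. Writing $E_n(\epsilon)$ for the deviation events, you identify the non-convergence set as $D=\bigcup_{k}\limsup_n E_n(1/k)$ and track which axioms of an uncertainty measure are invoked at each step: monotonicity plus duality for $f_R\Rightarrow\tilde m$ (the set $\limsup_n E_n(\epsilon)$ sits inside the complement of the measure-one set $\Lambda$, which has measure zero by duality), and countable subadditivity plus duality for the converse (to pass from $M(\limsup_n E_n(1/k))=0$ for every $k$ to $M(D)=0$ and then to $M(\Gamma\setminus D)=1$). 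This bookkeeping is not pedantry: an uncertainty measure is not additive, so the classical probabilistic equivalence between almost-sure convergence and the limsup criterion cannot be quoted verbatim, and your observation that subadditivity and duality suffice is precisely what makes the proposition true in Liu's setting. The reduction from arbitrary $\epsilon>0$ to the countable family $\{1/k\}$ via monotonicity of $M$ and nesting of the events is also handled correctly. In short: a correct proof, along the lines the paper presupposes, but with the details that genuinely need checking actually checked.
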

 \begin{proof}
The proof is easily obtained from definition of Riesz  type convergent almost sure in (\ref{ms1}).
 \end{proof}

\begin{theorem}\label{teo1}
 Let consider  the sequence $(\xi_n(\gamma))$ of uncertain variables. Then   $(\xi_n(\gamma))\in u_R$  if and only if $(\xi_n(\gamma))\in \tilde{u}_R$.
\end{theorem}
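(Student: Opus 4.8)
The plan is to recognize this statement as the standard equivalence between almost-uniform convergence of the Riesz transforms $\nu_n(\gamma)=\sum_{i=1}^n\frac{p_i}{P_n}\xi_i(\gamma)$ from \eqref{d4} and the ``tail-measure'' formulation, transported to Liu's uncertainty setting where $M$ is only monotone and countably sub-additive (the hypothesis used in Lemma \ref{lem:BC}). Throughout I read the class $u_R$ as asserting that the Riesz means $\nu_n$, rather than the raw $\xi_n$, converge uniformly on each $\Gamma\setminus E_k$; this is what makes both classes refer to the same object $\nu_n-\xi$ appearing in $\tilde u_R$, and it matches the name ``Riesz type convergent uniformly almost surely''.

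For the forward implication $u_R\subseteq\tilde u_R$, I would assume $(\xi_n)\in u_R$, so there exist sets $(E_k)$ with $M(E_k)\to0$ and $\nu_n\to\xi$ uniformly on $\Gamma\setminus E_k$ for each $k$. Fix $\epsilon>0$ and $\delta>0$, and choose $k$ with $M(E_k)<\delta$. Uniform convergence yields an $N$ with $\lvert\nu_n(\gamma)-\xi(\gamma)\rvert<\epsilon$ for all $n\ge N$ and all $\gamma\in\Gamma\setminus E_k$, so that $\{\gamma:\lvert\nu_n(\gamma)-\xi(\gamma)\rvert\ge\epsilon\}\subseteq E_k$ for every $n\ge N$. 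Taking the union over $n\ge m$ with $m\ge N$ keeps the set inside $E_k$, whence monotonicity of $M$ gives $M\bigl(\bigcup_{n\ge m}\{\lvert\nu_n-\xi\rvert\ge\epsilon\}\bigr)\le M(E_k)<\delta$. Since $\delta$ was arbitrary, the tail measures tend to $0$, i.e.\ $(\xi_n)\in\tilde u_R$; this direction uses only monotonicity.

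For the converse $\tilde u_R\subseteq u_R$, the key is a diagonal construction. Fix $k$ and a sequence $\delta_k\to0$. Using the $\tilde u_R$ hypothesis with $\epsilon=1/j$, choose for each $j\ge1$ an index $m_{k,j}$ with $M\bigl(\bigcup_{n\ge m_{k,j}}\{\lvert\nu_n-\xi\rvert\ge 1/j\}\bigr)<\delta_k\,2^{-j}$, and set $E_k=\bigcup_{j\ge1}\bigcup_{n\ge m_{k,j}}\{\lvert\nu_n-\xi\rvert\ge 1/j\}$. Countable sub-additivity then gives $M(E_k)\le\sum_{j\ge1}\delta_k\,2^{-j}=\delta_k\to0$. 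To verify uniform convergence off $E_k$, fix $\epsilon>0$, pick $j$ with $1/j<\epsilon$, and note that for $\gamma\notin E_k$ and $n\ge m_{k,j}$ one has $\lvert\nu_n(\gamma)-\xi(\gamma)\rvert<1/j<\epsilon$, with the threshold $m_{k,j}$ independent of $\gamma$; hence $\sup_{\gamma\in\Gamma\setminus E_k}\lvert\nu_n(\gamma)-\xi(\gamma)\rvert\le 1/j$ for all $n\ge m_{k,j}$, so $\nu_n\to\xi$ uniformly on $\Gamma\setminus E_k$ and $(\xi_n)\in u_R$.

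The main obstacle, and the step deserving care, is that $M$ is not assumed countably additive. The forward direction is safe because it invokes only monotonicity. The converse must be arranged so that the single inequality $M(E_k)\le\sum_j M(\cdots)$ is precisely the countable sub-additivity guaranteed for $M$ in Lemma \ref{lem:BC}; the geometric weights $\delta_k\,2^{-j}$ are chosen exactly so that this sub-additive bound still forces $M(E_k)\to0$. No continuity from below and no additivity of $M$ is needed anywhere, which is what lets the classical Egorov-type equivalence survive in the uncertainty framework.
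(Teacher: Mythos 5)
Your proof is correct, and its forward direction coincides with the paper's: the tail union $\bigcup_{n\ge m}\{\gamma:|\nu_n(\gamma)-\xi(\gamma)|\ge\epsilon\}$ is trapped inside $E_k$ once $m$ is large enough, and monotonicity of $M$ finishes the job. (Your explicit reading of $u_R$ as uniform convergence of the Riesz means $\nu_n$, not of the raw $\xi_n$, is also the reading the paper's own proof silently adopts.) In the converse direction, however, you take a genuinely different --- and in fact more careful --- route. The paper sets, for each $k$, $E_k=\bigcup_{n=k}^{\infty}\{\gamma:|\nu_n(\gamma)-\xi(\gamma)|\ge 1/k\}$ and asserts that $M(E_k)\to 0$ ``by assumption''. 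That is a diagonal claim, with the threshold $1/k$ and the starting index $k$ moving together, and it does not follow from the definition of $\tilde{u}_R$, which fixes $\epsilon$ and lets only $m\to\infty$: if, say, $\nu_n-\xi\equiv 2/n$ (uniform convergence, so certainly in $\tilde{u}_R$), then $\{|\nu_n-\xi|\ge 1/k\}=\Gamma$ for every $n$ with $k\le n\le 2k$, hence $M(E_k)=1$ for all $k$. Moreover, even granting that claim, off the paper's $E_k$ one only obtains the fixed bound $|\nu_n-\xi|<1/k$ for $n\ge k$, which is a uniform bound but not uniform convergence on $\Gamma\setminus E_k$. Your double-union construction $E_k=\bigcup_{j\ge 1}\bigcup_{n\ge m_{k,j}}\{|\nu_n-\xi|\ge 1/j\}$ with the geometric weights $\delta_k 2^{-j}$ repairs both defects simultaneously: countable sub-additivity (the same hypothesis invoked for Lemma \ref{lem:BC}) yields $M(E_k)\le\delta_k$, and off $E_k$ every threshold $1/j$ is eventually beaten with an index $m_{k,j}$ independent of $\gamma$, which is genuine uniform convergence. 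So your argument establishes the stated equivalence, whereas the paper's own converse, as written, contains a gap that your construction is exactly what is needed to close.
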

\begin{proof}
Firstly we show that  if $(\xi_n(\gamma)) \in u_R$, then $(\xi_n(\gamma)) \in \tilde{u}_R$. Now let suppose that $(\xi_n(\gamma)) \in u_R$. By definition 
there exists a sequence of sets $(E_k) \subset \Gamma$ with $M(E_k) \to 0$ as $k \to \infty$, such that $(\xi_n(\gamma))$ converges uniformly to $\xi(\gamma)$ on $\Gamma \setminus E_k$ for any fixed $k$.
To prove that $(\xi_n(\gamma)) \in \tilde{u}_R$, we need to show:
\[
\lim_{m \to \infty} M \left( \bigcup_{n=m}^{\infty} \left\{ \gamma \in \Gamma : \left| \sum_{i=1}^n \frac{p_i}{P_n} \xi_i(\gamma) - \xi(\gamma) \right| \geq \epsilon \right\} \right) = 0
\]
for any $\epsilon > 0$.

Since $(\xi_n(\gamma))$ converges uniformly to $\xi(\gamma)$ on $\Gamma \setminus E_k$ and $M(E_k) \to 0$, this implies that for any given $\epsilon > 0$, there exists an integer $N$ (depending on $\epsilon$ and $k$) such that for all $n \geq N$, we have
\[
\left| \sum_{i=1}^n \frac{p_i}{P_n} \xi_i(\gamma) - \xi(\gamma) \right| < \epsilon \quad \text{for all } \gamma \in \Gamma \setminus E_k.
\]
Consequently, for sufficiently large $m$, the union 
\[
\bigcup_{n=m}^{\infty} \left\{ \gamma \in \Gamma : \left| \sum_{i=1}^n \frac{p_i}{P_n} \xi_i(\gamma) - \xi(\gamma) \right| \geq \epsilon \right\}
\]
will only intersect with $E_k$, which has measure going to zero as $k \to \infty$. Therefore, we obtain
\[
\lim_{m \to \infty} M \left( \bigcup_{n=m}^{\infty} \left\{ \gamma \in \Gamma : \left| \sum_{i=1}^n \frac{p_i}{P_n} \xi_i(\gamma) - \xi(\gamma) \right| \geq \epsilon \right\} \right) = 0.
\]
Thus, $(\xi_n(\gamma)) \in \tilde{u}_R$.

Conversely, we will show that if $(\xi_n(\gamma)) \in \tilde{u}_R$, then $(\xi_n(\gamma)) \in u_R$. Now assume that $(\xi_n(\gamma)) \in \tilde{u}_R$. This implies that for any $\epsilon > 0$,
\[
\lim_{m \to \infty} M \left( \bigcup_{n=m}^{\infty} \left\{ \gamma \in \Gamma : \left| \sum_{i=1}^n \frac{p_i}{P_n} \xi_i(\gamma) - \xi(\gamma) \right| \geq \epsilon \right\} \right) = 0.
\]

To show that $(\xi_n(\gamma)) \in u_R$, we need to find sets $(E_k)$ with $M(E_k) \to 0$ as $k \to \infty$ such that $(\xi_n(\gamma))$ converges uniformly to $\xi(\gamma)$ on $\Gamma \setminus E_k$ for any fixed $k$.

For a fixed $k$, set
\[
E_k = \bigcup_{n=k}^{\infty} \left\{ \gamma \in \Gamma : \left| \sum_{i=1}^n \frac{p_i}{P_n} \xi_i(\gamma) - \xi(\gamma) \right| \geq \frac{1}{k} \right\}.
\]
By assumption, $M(E_k) \to 0$ as $k \to \infty$.

On $\Gamma \setminus E_k$, for any $n \geq k$, we have
\[
\left| \sum_{i=1}^n \frac{p_i}{P_n} \xi_i(\gamma) - \xi(\gamma) \right| < \frac{1}{k}.
\]
This implies that $(\xi_n(\gamma))$ converges uniformly to $\xi(\gamma)$ on $\Gamma \setminus E_k$ as required by the definition of $u_R$. Therefore, $(\xi_n(\gamma)) \in u_R$.

Consequently, since we have shown both directions, we conclude that \(
u_R = \tilde{u}_R.
\) Thus  the Theorem \ref{teo1} is proved.
\end{proof}

\begin{theorem}
 The sequence $(\xi_n(\gamma))$ of uncertain variables uniformly Riesz type convergent in measure to $\xi(\gamma)$ if and only if there exists subsequence $(\xi_{n'_k}(\gamma))$ of $(\xi_{n'}(\gamma))$ such that $(\xi_{n'_k}(\gamma))$ is Riesz  type uniformly converges uniformly almost sure to $\xi(\gamma)$, $(k\rightarrow\infty)$, for any subsequence $(\xi_{n'}(\gamma))$ of $(\xi_n(\gamma))$.
\end{theorem}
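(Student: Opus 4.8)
The plan is to reduce the statement to the subsequence criterion for ordinary convergence in measure (the unlabeled lemma of Liu stated just before Theorem~\ref{teo1}), applied not to $(\xi_n)$ itself but to its Riesz transform. Concretely, I would set $\nu_n(\gamma)=\sum_{i=1}^{n}\frac{p_i}{P_n}\xi_i(\gamma)$ as in \eqref{d4} and regard $(\nu_n)$ as a single sequence of uncertain variables. By the definition of $m_R$ in \eqref{ms2} with $\varphi(x)=x$, the assertion that $(\xi_n)$ is Riesz type convergent in measure to $\xi$ is exactly the assertion that $(\nu_n)$ converges in measure to $\xi$ in Liu's ordinary sense. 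Likewise, by the definition of $u_R$ together with the identification $u_R=\tilde u_R$ established in Theorem~\ref{teo1}, a subsequence $(\xi_{n'_k})$ lies in $u_R$ precisely when the corresponding subsequence $(\nu_{n'_k})$ of Riesz means converges uniformly almost surely to $\xi$.

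Once this dictionary is fixed, I would invoke the preceding lemma with the sequence $(\nu_n)$ playing the role of the generic uncertain sequence: that lemma asserts that $(\nu_n)$ converges in measure to $\xi$ if and only if every subsequence $(\nu_{n'})$ admits a further subsequence $(\nu_{n'_k})$ converging uniformly almost surely to $\xi$. Rewriting both sides through the correspondence of the previous paragraph yields exactly the two directions of the theorem.

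For completeness, I would also sketch the two implications directly, so that the argument does not rest on a verbatim reading of that lemma. For the forward direction, assuming $(\xi_n)\in m_R$ gives $M(\{|\nu_n-\xi|\ge\varepsilon\})\to0$; any subsequence $(\nu_{n'})$ still converges in measure, so I would select indices $n'_k$ so rapidly that $\sum_k M(\{|\nu_{n'_k}-\xi|\ge 1/k\})<\infty$, whence Lemma~\ref{lem:BC} forces $M\bigl(\bigcup_{j\ge m}\{|\nu_{n'_j}-\xi|\ge\varepsilon\}\bigr)\to0$; this is the $\tilde u_R$ condition, hence membership in $u_R$ by Theorem~\ref{teo1}. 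For the converse I would argue by contraposition: if $(\xi_n)\notin m_R$, then there exist $\varepsilon>0$, $\delta>0$ and a subsequence along which $M(\{|\nu_{n'}-\xi|\ge\varepsilon\})\ge\delta$, and since uniform almost sure convergence of any further subsequence would drive this measure to $0$, no admissible sub-subsequence can exist.

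The main obstacle I anticipate is definitional rather than analytic: I must pin down what ``Riesz type uniformly convergent almost surely'' means for a subsequence, namely confirm that it refers to the subsequence $(\nu_{n'_k})$ of Riesz means of the \emph{original} sequence, and not to freshly formed Riesz averages of the subsampled variables. The reading consistent with the definition of $u_R$ and with $\tilde u_R$ is the former, and under that reading the reduction to the ordinary lemma is clean; I would state this interpretation explicitly at the outset. A secondary, minor point is checking that subsequences of a measure-convergent sequence remain measure-convergent and that Borel--Cantelli applies under the countable sub-additivity hypothesis of Lemma~\ref{lem:BC}, both of which are routine.
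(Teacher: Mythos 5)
Your proposal is correct and matches the paper's own argument: the paper's proof is exactly your direct sketch---for the forward direction it extracts a rapidly convergent subsequence with $M(\{|\nu_{n'_k}-\xi|\ge 1/k\})\le 2^{-k}$, bounds the tail unions via countable sub-additivity to obtain the $\tilde u_R$ condition, and for the converse argues by contraposition on a subsequence whose Riesz means stay bounded away from $\xi$ in measure. Your preferred packaging---applying the paper's unlabeled subsequence lemma to the sequence of Riesz means $(\nu_n)$, under the (correct) reading that the subsequence condition refers to $(\nu_{n'_k})$ rather than to re-averaged subsampled variables---is merely a cleaner way of organizing the same reduction, not a different argument, since that lemma is itself proved by the identical extraction-plus-sub-additivity scheme.
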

\begin{proof}
Suppose that  the sequence $(\xi_n(\gamma))$ of uncertain variables Riesz type convergent in measure to $\xi(\gamma)$. Then the sequence $(\xi_{n'}(\gamma))$ of uncertain variables  Riesz type convergent in measure to $\xi(\gamma)$. It follows  from the definition of Riesz type converges in measure that there exists a subsequence  $(\xi_{n'_k}(\gamma))$ of $(\xi_{n'}(\gamma))$  such that
\eqy {M\{\gamma\in\Gamma:|\sum_{i=1}^n\frac{p_i}{P_n}\xi_i(\gamma)-\xi(\gamma)|\geq\frac{1}{k}\}\leq\frac{1}{2^k}~~\text{for any}~~k\geq 1.} From here we see that
\eqy {M(\bigcup_{k=m}^{\infty}\{\gamma\in\Gamma:|\sum_{i=1}^n\frac{p_i}{P_n}\xi_i(\gamma)-\xi(\gamma)|\geq\frac{1}{k}\})\leq\sum_{k=m}^{\infty}\frac{1}{2^k}
=\frac{1}{2^{m-1}}} for any $m\geq1.$ Thus
\eqy{\lim_mM(\bigcup_{k=m}^{\infty}\{\gamma\in\Gamma:|\sum_{i=1}^n\frac{p_i}{P_n}\xi_i(\gamma)-\xi(\gamma)|\geq\frac{1}{k}\})=0}
This means that  $(\xi_{n'_k}(\gamma))$ is  Riesz type converges uniformly almost sure convergent  $\xi(\gamma)$.

Now let us suppose that the sequence $(\xi_n(\gamma))$ of uncertain variables  does not  Riesz type convergent in measure to $\xi(\gamma)$. Then there exists a $\epsilon>0$ such that
\eqy{\lim_mM(\bigcup_{k=m}^{\infty}\{\gamma\in\Gamma:|\sum_{i=1}^n\frac{p_i}{P_n}\xi_i(\gamma)-\xi(\gamma)|\geq\frac{1}{k}\})>K>0.}
Then there exists a subsequence $(\xi_{n'}(\gamma))$ of $(\xi_{n}(\gamma))$  such that \eqy{M(\bigcup_{k=m}^{\infty}\{\gamma\in\Gamma:|\sum_{i=1}^{n'}\frac{p_i}{P_n}\xi_i(\gamma)-\xi(\gamma)|\geq\epsilon\})>K.}
It is clear that $(\xi_{n'}(\gamma))$ has no subsequence Riesz type converges uniformly almost sure convergent to $\xi(\gamma)$ then $(\xi_n(\gamma))$  Riesz type convergent in measure to $\xi(\gamma)$.
\end{proof}

The following theorem is describe some important propositionerties of Riesz type  convergent in measure and Riesz type converges in  $p-$distance.

\begin{theorem}
If uncertain variables sequence $(\xi_n(\gamma))$ Riesz type convergent to $\xi(\gamma)$ and $\eta(\gamma)$, respectively then $M\{\xi(\gamma)=\eta(\gamma)\}=1.$
\end{theorem}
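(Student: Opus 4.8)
The plan is to run a standard uniqueness-of-limit argument, adapted to Liu's uncertainty measure, where only subadditivity (not additivity) is available. Write $\nu_n(\gamma)=\sum_{i=1}^{n}\frac{p_i}{P_n}\xi_i(\gamma)$ for the Riesz transform, and read ``Riesz type convergent'' as convergence in measure, so that by hypothesis both $M\{|\nu_n-\xi|\ge\delta\}\to0$ and $M\{|\nu_n-\eta|\ge\delta\}\to0$ for every $\delta>0$. The goal is to show that the event $\{\xi\neq\eta\}$ carries measure zero.

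First I would fix $\varepsilon>0$ and record the deterministic set inclusion
\[
\{\gamma:|\xi(\gamma)-\eta(\gamma)|\ge\varepsilon\}\subseteq\{\gamma:|\nu_n(\gamma)-\xi(\gamma)|\ge\tfrac{\varepsilon}{2}\}\cup\{\gamma:|\nu_n(\gamma)-\eta(\gamma)|\ge\tfrac{\varepsilon}{2}\},
\]
which follows from the triangle inequality $|\xi-\eta|\le|\xi-\nu_n|+|\nu_n-\eta|$: if both events on the right fail then $|\xi-\eta|<\varepsilon$. Applying the subadditivity of $M$ together with the two convergence hypotheses yields, for every $n$,
\[
M\{|\xi-\eta|\ge\varepsilon\}\le M\{|\nu_n-\xi|\ge\tfrac{\varepsilon}{2}\}+M\{|\nu_n-\eta|\ge\tfrac{\varepsilon}{2}\}\xrightarrow[n\to\infty]{}0.
\]
Since the left-hand side does not depend on $n$, I conclude $M\{|\xi-\eta|\ge\varepsilon\}=0$ for every $\varepsilon>0$.

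Next I would write $\{\xi\neq\eta\}=\bigcup_{k=1}^{\infty}\{|\xi-\eta|\ge 1/k\}$ and invoke the countable subadditivity of $M$ to obtain $M\{\xi\neq\eta\}\le\sum_{k\ge1}M\{|\xi-\eta|\ge 1/k\}=0$. Finally, the duality axiom $M(\Lambda)+M(\Lambda^{c})=1$ turns this into $M\{\xi=\eta\}=1-M\{\xi\neq\eta\}=1$, which is the desired conclusion.

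The main obstacle is the absence of additivity of $M$: one cannot split or add measures as freely as in classical probability, so every step must rest only on the three valid structural properties---monotonicity, countable subadditivity, and duality. A secondary subtlety is that if ``Riesz type convergent'' is intended in the in-mean or Orlicz--$p$ sense rather than in measure, one must first apply the Markov-type estimate of Lemma \ref{lem1} to deduce that $(\nu_n)$ converges in measure to each of $\xi$ and $\eta$; once this reduction is in place, the argument above applies verbatim.
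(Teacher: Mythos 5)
Your proof takes essentially the same route as the paper's: both insert and subtract the Riesz mean $\nu_n$, use the triangle inequality and the subadditivity of $M$ to bound $M\{|\xi-\eta|\ge\varepsilon\}$ by $M\{|\nu_n-\xi|\ge\tfrac{\varepsilon}{2}\}+M\{|\nu_n-\eta|\ge\tfrac{\varepsilon}{2}\}$, and let $n\to\infty$. Your write-up is in fact more complete than the paper's, which stops once the right-hand side tends to zero; you add the steps the paper leaves implicit --- observing that the left-hand side is independent of $n$, taking the countable union over $\varepsilon=1/k$ by countable subadditivity, and invoking the duality axiom to pass from $M\{\xi\neq\eta\}=0$ to $M\{\xi=\eta\}=1$.
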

\begin{proof}
If we consider subadditivity axiom then we have
 \begin{align*}
 M\{|\xi(\gamma)-\eta(\gamma)&+\sum_{i=1}^n\frac{p_i}{P_n}\xi_i(\gamma)-\sum_{i=1}^n\frac{p_i}{P_n}\xi_i(\gamma)|\geq\epsilon\}\\&\leq M\{|\sum_{i=1}^n\frac{p_i}{P_n}\xi_i(\gamma)-\xi(\gamma)|\geq\frac{\epsilon}{2}\}+ M\{|\sum_{i=1}^n\frac{p_i}{P_n}\xi_i(\gamma)-\eta(\gamma)|\geq\frac{\epsilon}{2}\}\\&\rightarrow 0~~\text{for}~~ n\rightarrow\infty.
 \end{align*} 
\end{proof}
\section{Conclusion}
This study has thoroughly examined the Riesz-type convergence properties of uncertain variables and their relationships in functional analysis topics. The findings have clarified the connections between the convergence types of uncertain variables in sequence spaces and given the mathematical analysis of these convergence types. One of the key results of the study is its clarification of the differences between the Riesz-type convergence of uncertain variables and other convergence types, emphasizing the significant role of Riesz-type convergence within the framework of functional analysis. Additionally, determining the isomorphisms of uncertain variables in various sequence spaces has contributed to a deeper understanding of the transitions between such spaces. The inclusions was given as table in the Table \ref{tt1}.

It is believed that the Riesz-type convergence of uncertain variables holds a significant place in mathematical modeling and applied fields. The mathematical framework presented in this study is expected to inspire more in-depth research on uncertainty theory and convergence analysis.

\begin{table}[H]
\centering
\caption{Summary of the inclusions as a table.}
\label{tt1}
\begin{tabular}{|c|c|c|c|c|c|c|}
\hline
$(\xi_n(\gamma))\in f$ & \cellcolor{black!50} & $(\xi_n(\gamma))\in e$ & $\Longrightarrow$ & $(\xi_n(\gamma))\in m$ & $\Longrightarrow$ & $(\xi_n(\gamma))\in d$ \\ \hline
$\Downarrow$ &  \cellcolor{black!50} & $\Downarrow$ &  \cellcolor{black!50} & $\Downarrow$ &  \cellcolor{black!50} & $\Downarrow$ \\ \hline
$(\xi_n(\gamma))\in f_R$ & \cellcolor{black!50} & $(\xi_n(\gamma))\in e_R$ & $\Longrightarrow$ & $(\xi_n(\gamma))\in m_R$ & $\Longrightarrow$ & $(\xi_n(\gamma))\in d_R$ \\ \hline
\multicolumn{7}{|c|}{\( m_R \cap f_R \neq \emptyset  \)} \\ \hline
\end{tabular}
\end{table}
\subsection{Conjecture}
The question of whether the spaces $f_R$ and $e_R$, or $f_R$ and $d_R$, overlap remains open, as research on this issue is still ongoing.

\end{document}